\documentclass[10pt]{article}

\usepackage{arxiv}
\usepackage{algorithm}
\usepackage{algorithmic}
\usepackage[utf8]{inputenc} % allow utf-8 input
\usepackage[T1]{fontenc}    % use 8-bit T1 fonts
\usepackage[hidelinks]{hyperref}       % hyperlinks
\usepackage{url}            % simple URL typesetting
\usepackage{booktabs}       % professional-quality tables
\usepackage{amsfonts}       % blackboard math symbols
\usepackage{nicefrac}       % compact symbols for 1/2, etc.
\usepackage{microtype}      % microtypography
\usepackage{xcolor}  
\usepackage{natbib}% colors
\usepackage{amsmath}
\usepackage{amsthm}
\usepackage{amssymb}
\usepackage{mathrsfs}
\usepackage{mathtools}
\usepackage{xcolor}

\usepackage{graphicx}
\usepackage{subcaption}
\usepackage{booktabs} % for professional tables
\usepackage{paralist} % for compactenum environment
\usepackage{enumitem}
\usepackage{tocloft}
\usepackage{setspace}
\usepackage{subcaption}
\usepackage[capitalize,noabbrev]{cleveref}
\title{When Are Nesterov Trajectories Eventually Locally Minimizing?}
\usepackage{amsthm}
\newtheorem{thm}{Theorem}[section]
\newtheorem*{thm*}{Theorem} % unnumbered
\newtheorem{lem}[thm]{Lemma}
\newtheorem*{lem*}{Lemma} 
\newtheorem{deff}[thm]{Definition}
\newtheorem*{deff*}{Definition}

\newtheorem{rmk}{Remark}

\newtheorem{prop}[thm]{Proposition}

\newtheorem{cor}[thm]{Corollary}
\newtheorem*{prob*}{Problem}

\newcommand{\R}{\mathbb{R}}

\newcommand{\Norm}[1]{\left\Vert #1 \right\Vert}

\DeclareUnicodeCharacter{210B}{\mathcal{H}}

\usepackage[textsize=tiny]{todonotes}

\newif\ifshowcomments
\showcommentstrue
\begin{document}
\author{%
  Peyman Morteza and Stephen  Wright\\
  Department of Computer Sciences\\
  University of Wisconsin--Madison\\
\texttt{\{peyman,swright\}@cs.wisc.edu}
}
\date{\mbox{}}
\maketitle

\begin{abstract}
We revisit the variational characterization of Nesterov’s celebrated method and study when Nesterov trajectories are eventually locally minimizing in a variational sense, a notion that we define precisely. We show that this question is equivalent to the nonnegativity of an associated Schr\"odinger operator acting on variations of the trajectory. For power objectives of the form
\[
f(x)=\frac{1}{p}\|x\|^p
\]
(where $\| \cdot\|$ denotes the Euclidean norm), we give a complete characterization: Among all exponents \(p\ge 2\), the Nesterov trajectory with non-zero initial position is eventually locally minimizing if and only if \(p=4\). We further interpret this exceptional exponent through the stability theory of semilinear elliptic equations.

\end{abstract}

\setcounter{tocdepth}{1}
\setlength{\cftbeforesecskip}{0.6em}
{
\begingroup
\setstretch{1.0}   % or 0.85–0.95 depending on overflow
\tableofcontents
\endgroup
}
\vspace{-1mm}
\section{Introduction}
A central problem in optimization is to minimize a convex
function $f:\R^n\to\R$,
\begin{align*}
\label{problem:optimization}
    \min_{x\in\R^n} f(x).
\end{align*}
Nesterov's accelerated gradient method \citep{nesterov1983method} is a foundational algorithm for this problem. Its fast convergence guarantees have made it influential in large-scale optimization and machine learning \citep{wright2022optimization}. Despite its simplicity, understanding the acceleration phenomenon has led to several perspectives, including continuous-time dynamics and variational characterization 
\citep{jordan2018dynamical,wibisono2016variational, su2016differential,muehlebach2019dynamical,muehlebach2021optimization}. 

\citet{su2016differential} derived the following second-order ordinary differential equation as a continuous-time analog of Nesterov's method, 
\[
\ddot X(t)+\frac{3}{t}\dot X(t)+\nabla f(X(t))=0,
\qquad t>0.
\]
This ODE provides a dynamical system framework for studying accelerated trajectories. Building on that, a variational framework, developed by \citet{wibisono2016variational} and discussed in \cite{jordan2018dynamical}, characterizes these trajectories as stationary points of the following action functional associated with a time-dependent Lagrangian, 
\begin{align}
\label{eq:defL-intro}
\begin{split}
&\mathcal{L}:\mathcal{C}_{x_a,x_b} \to \R, \quad \\
&\mathcal{L}(\gamma)
=
\int_a^b
t^3
\left(
\frac{1}{2}\Norm{\dot{\gamma}(t)}^2
-
f(\gamma(t))
\right)\,dt,
\end{split}
\end{align} 
where $\mathcal{C}_{x_a,x_b}$ is the space of sufficiently regular curves, $\gamma:[a,b]\to \R^n$, connecting $x_a$ to $x_b$. We refer to $\mathcal{L}$ as the {\it Nesterov functional}.

In this paper, we revisit the variational perspective and study when the accelerated trajectories are actually minimizers of the Nesterov functional. We begin by computing its second variation
(see Lemma \ref{lem:second-variation}). Our first key observation is that the change of variables,
\begin{equation} \label{eq:cvar}
s:=\log t, \quad u(s):=e^s h(e^s) = t h(t)
\end{equation}
transforms the second variation into an unweighted quadratic
form associated with the Schr\"odinger operator,
\[
\mathcal H_\gamma
:=
-\frac{d^2}{ds^2}
+
I-e^{2s}\nabla^2 f(\gamma(e^s)),
\]
acting on vector-valued functions on $[\log a,\log b]$ with
Dirichlet boundary conditions. In particular, we show that, nonnegativity
of the second variation is equivalent to nonnegativity of
the associated Schr\"odinger operator
(see Propositions \ref{prop:logaritmic-change-variable}, \ref{prop:schrodinger}, and \ref{prop:eventual-minimality-second-variation}). This representation implies that the Hessian scaled by the
square of time,
\[
e^{2s}\nabla^2 f(\gamma(e^s))
=
t^2\nabla^2 f(\gamma(t)),
\qquad t=e^s,
\]
is the key quantity governing the behavior of the second variation. This observation also shows that the notion of minimality is dependent on the time along the flow. In particular, the starting time and interval length play a role in determining when the action is minimal or not. Motivated by this we define the notion of {\it eventually locally minimizing} (See also  Definition \ref{deff:eventual-local-min}). 
\begin{deff*}
Let $X:[0,\infty)\to\R^n$ be a Nesterov trajectory. Fix \(T_0>0\). We say that $X$ is \emph{$T_0$-eventually locally minimizing} if for every finite $a$ and $b$ satisfying $T_0<a<b<\infty$, the restriction, $X|_{[a,b]}$, is a local minimizer of the Nesterov functional in $H^{1}$-topology, among curves satisfying, $\gamma(a)=X(a),\gamma(b)=X(b)$. We say that \(X\) is eventually locally minimizing if this holds for some \(T_0>0\). 
\end{deff*}

Intuitively speaking, this notion captures the idea that, after a sufficiently large time, the trajectory locally minimizes the action on every subsequent finite interval. Motivated by this notion, we ask the following question:

\begin{quote}
\emph{For which classes of objective functions is the Nesterov
trajectory eventually locally minimizing?}
\end{quote}
\cite{zhang2021rethinking} study the minimality of accelerated trajectories for quadratic objectives on finite intervals. Beyond introducing eventual local minimality as a natural notion for studying long-time variational behavior, we show that vanishing curvature alone is insufficient to guarantee this property. Our main result (see Theorem \ref{thm:power-characterization}) gives a complete characterization for power objective functions $f(x)=\frac{1}{p}\|x\|^p$ with $p\ge 2$. Perhaps surprisingly, we find that the only exponent for which the regular Nesterov trajectory with non-zero initial position is eventually locally minimizing is $p=4$. The proof is inspired by the stability theory of radial
solutions of semilinear elliptic equations. The connection
arises because the Nesterov ODE is the radial reduction in four dimensions of the system,
\[
\Delta U+\nabla f(U)=0,
\qquad U:\R^4\to\R^n.
\]
We provide a self-contained proof of
Theorem \ref{thm:power-characterization} inspired by techniques from
elliptic stability theory.
% and develop the PDE interpretation
% in Section \ref{sec:pde-interpretation}.

The organization of the rest of this paper is as follows. In Section \ref{sec:opt-flow-revisit}, we recall the flow formulations of gradient descent and Nesterov's accelerated method. In Section \ref{sec:var-revisit}, we review the variational characterization of the Nesterov flow and compute the first and second variations of the associated Lagrangian action. In Section \ref{sec:shrod}, we show that the local minimality of a Nesterov trajectory is governed by an associated Schr\"odinger operator, and define what we mean by an eventually locally minimizing trajectory. In Section \ref{sec:eventual-power}, we obtain a precise characterization for power objectives. In Section \ref{sec:pde-interpretation}, we give a complete PDE interpretation for our main result and in Appendix \ref{sec:proof} we present omitted proofs.

\section{Optimization Methods as Flows}
\label{sec:opt-flow-revisit}
In this section, we recall some basic facts about optimization methods. Specifically, we review the formulations of gradient descent and Nesterov's accelerated method as solutions of certain ODEs. We refer to  \citep{su2016differential} and references therein for detailed discussion. We present them here for completeness and to help set the stage for our results.
\subsection{Gradient Flow}
We first recall the basic convergence result for gradient flow, which can be viewed as the continuous-time analog of gradient descent.
For completeness we include  a proof of this result in Appendix~{\ref{sec:proof}}; see also \citep{su2016differential}.
\begin{thm}
\label{thm:cont-grad-descent}
Let \(f:\R^n\to\R\) be a convex, $C^1$ function, with a finite minimum. Let \(X:[0,\infty)\to\R^n\) be the solution of,
\[
\dot X(t)+\nabla f(X(t))=0,
\qquad
X(0)=x_0.
\]
Then, for every \(x^\star\in\operatorname{argmin} f\),
\[
f(X(t))-f(x^\star)
\le
\frac{\Norm{x_0-x^\star}^2}{2t},
\qquad t>0.
\]
In particular,
\[
f(X(t))-f(x^\star)
=
O\left(\frac{1}{t}\right).
\]
\end{thm}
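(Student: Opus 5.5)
The plan is the standard Lyapunov-function argument. Fix $x^\star\in\operatorname{argmin} f$ and consider the time-weighted energy
\[
\mathcal{E}(t) := t\bigl(f(X(t))-f(x^\star)\bigr) + \frac12\Norm{X(t)-x^\star}^2 .
\]
We will show $\mathcal{E}$ is nonincreasing on $[0,\infty)$. Since $\mathcal{E}(0)=\frac12\Norm{x_0-x^\star}^2$, and since the second term is nonnegative, this gives
\[
t\bigl(f(X(t))-f(x^\star)\bigr)\le \mathcal{E}(t)\le \mathcal{E}(0) = \tfrac12\Norm{x_0-x^\star}^2 .
\]
Dividing by $t>0$ yields the claimed bound. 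The $O(1/t)$ statement is then immediate.

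Because $f\in C^1$ and $X$ solves the ODE, $X$ is $C^1$. Hence $t\mapsto f(X(t))$ is differentiable with derivative $\langle \nabla f(X),\dot X\rangle = -\Norm{\nabla f(X)}^2$. Differentiating $\mathcal{E}$ gives
\[
\dot{\mathcal{E}}(t) = f(X)-f(x^\star) - t\Norm{\nabla f(X)}^2 - \langle X-x^\star, \nabla f(X)\rangle .
\]
Convexity (the first-order inequality) gives $f(x^\star)\ge f(X)+\langle \nabla f(X), x^\star - X\rangle$, that is,
\[
f(X)-f(x^\star)-\langle X-x^\star,\nabla f(X)\rangle\le 0 .
\]
Therefore $\dot{\mathcal{E}}(t)\le -t\Norm{\nabla f(X(t))}^2\le 0$ for $t\ge 0$, and monotonicity follows by the mean value theorem.

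The only point needing care is regularity and global existence of $X$ on $[0,\infty)$, which the statement takes as given. With merely $C^1$ (not Lipschitz-gradient) $f$, uniqueness may fail, but the argument applies to any $C^1$ solution, so this is not a real obstacle. The main step is simply choosing the right Lyapunov function. A useful check is that the factor $t$ on the suboptimality term is exactly what makes the convexity inequality absorb the cross term.
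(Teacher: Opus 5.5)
Your proof is correct and is essentially the paper's argument in differential form. The paper shows that $f(X(t))$ is nonincreasing and that $\frac{d}{dt}\tfrac12\Norm{X(t)-x^\star}^2\le-\bigl(f(X(t))-f(x^\star)\bigr)$, integrates the latter, and uses the monotonicity to bound the integrated suboptimality below by $t\bigl(f(X(t))-f(x^\star)\bigr)$. Your single Lyapunov function $\mathcal{E}$ packages exactly these two facts, which is the same device the paper uses for the Nesterov flow.

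One side remark needs correcting: for convex $f$, forward uniqueness does hold even without a Lipschitz gradient, because monotonicity of $\nabla f$ gives $\frac{d}{dt}\Norm{X(t)-Y(t)}^2\le0$ for any two solutions. This does not affect your argument.
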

\subsection{Nesterov Flow}

We now turn to the continuous-time model of Nesterov's accelerated method introduced by \citet{su2016differential}. Let $f:\R^n\to\R$ be a $C^1$, convex function, with finite minimum. The associated Nesterov ODE is,
\begin{align}
\label{eq:nesterov-ode}
\ddot X(t)+\frac{3}{t}\dot X(t)+\nabla f(X(t))=0,
\qquad t>0.
\end{align}
Because of the singular coefficient \(3/t\), the natural solution concept is the regular solution satisfying,
\[
X(0)=x_0,
\qquad
\dot X(0)=0.
\]
Given the singularity at $t=0$, one needs to be careful when discussing the above initial value problem. The following lemmas show local and global existence of solutions to the Nesterov ODE. These results are standard and we defer the proofs to Appendix \ref{sec:proof}. 
\begin{lem}[Regular solution at $t=0$]
\label{lem:reg-nesterov-sol}
Let $f:\R^n\to\R$ be $C^1$ with locally Lipschitz gradient. For every
$x_0\in\R^n$, there exists $T>0$ and a unique regular solution, $X:[0,T]\to\R^n$ of,
\begin{align*}
   & \ddot X(t)+\frac{3}{t}\dot X(t)+\nabla f(X(t))=0,
\qquad t>0,\\
& X(0)=x_0,
\qquad
\dot X(0)=0.
\end{align*}
Equivalently, $X$ is the unique continuous solution of the integral equation,
\[
X(t)
=
x_0-
\int_0^t
\tau^{-3}
\left(
\int_0^\tau
s^3\nabla f(X(s))\,ds
\right)d\tau.
\]
Moreover,
\[
X(t)
=
x_0-\frac{t^2}{8}\nabla f(x_0)+o(t^2)
\qquad
\text{as }t\to0.
\]
\end{lem}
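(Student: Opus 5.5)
The plan is to recast the singular initial value problem as the integral equation in the statement and then apply a Banach fixed-point argument in $C([0,T];\R^n)$. First I check that the two formulations are equivalent. The ODE can be written as $\frac{d}{dt}\bigl(t^3\dot X(t)\bigr)=-t^3\nabla f(X(t))$. If $X$ is a $C^2$ solution on $(0,T]$ that is continuous at $0$, with $\dot X(t)\to 0$ as $t\to0$, then $t^3\dot X(t)\to0$. Integrating from $0$ to $\tau$ gives $\dot X(\tau)=-\tau^{-3}\int_0^\tau s^3\nabla f(X(s))\,ds$, and integrating once more from $0$ to $t$ yields the integral equation. Conversely, suppose $X$ is a continuous solution of the integral equation. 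The inner integral is $O(\tau^4)$, so the integrand $\tau^{-3}\int_0^\tau s^3\nabla f(X(s))\,ds$ is continuous on $[0,T]$, equals $O(\tau)$, and vanishes at $\tau=0$. Hence $X\in C^1$ with $\dot X(0)=0$. Differentiating $t^3\dot X(t)$ then recovers the ODE on $(0,T]$, and $X$ is $C^2$ there because $\nabla f(X(\cdot))$ is continuous.

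For existence and uniqueness, fix $R>0$ and let $L$ be a Lipschitz constant of $\nabla f$ on the closed ball $\bar B(x_0,R)$. Set $M=\sup_{\bar B(x_0,R)}\Norm{\nabla f}$. Define the operator
\[
(\Phi X)(t)=x_0-\int_0^t\tau^{-3}\int_0^\tau s^3\nabla f(X(s))\,ds\,d\tau
\]
on the closed set $K=\{X\in C([0,T];\R^n):\Norm{X(t)-x_0}\le R\}$ with the sup norm. The key elementary estimate is $\int_0^t\tau^{-3}\int_0^\tau s^3\,ds\,d\tau=t^2/8$. From it I obtain $\Norm{\Phi X(t)-x_0}\le MT^2/8$ and $\Norm{\Phi X-\Phi Y}_\infty\le (LT^2/8)\Norm{X-Y}_\infty$. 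Choosing $T$ with $MT^2/8\le R$ and $LT^2/8<1$ makes $\Phi$ a contraction of $K$ into itself, which gives a unique fixed point in $K$.

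Uniqueness among all regular solutions, not just those in $K$, follows by continuity. Any other continuous solution stays in $\bar B(x_0,R)$ on some interval $[0,T']$, so the same contraction estimate forces the two solutions to agree there. A standard continuation argument on the set where they agree then extends this to all of $[0,T]$. I expect this bookkeeping, together with the careful handling of the singular point $t=0$ in the equivalence step, to be the only delicate part; the rest is routine.

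Finally, for the expansion, write $\nabla f(X(s))=\nabla f(x_0)+r(s)$, where $r(s)\to0$ as $s\to0$ by continuity of $X$ and of $\nabla f$. Substituting into the integral equation gives
\[
X(t)=x_0-\frac{t^2}{8}\nabla f(x_0)-\int_0^t\tau^{-3}\int_0^\tau s^3r(s)\,ds\,d\tau.
\]
The remainder term is bounded by $\frac{t^2}{8}\sup_{[0,t]}\Norm{r}=o(t^2)$, which is the claimed asymptotic.
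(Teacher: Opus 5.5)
Your proposal is correct and takes essentially the same approach as the paper. You rewrite the equation as $(t^3\dot X)'=-t^3\nabla f(X)$, pass to the integral equation, and use the identity $\int_0^t\tau^{-3}\int_0^\tau s^3\,ds\,d\tau=t^2/8$ to run a Banach fixed-point argument on a ball in $C([0,T];\R^n)$ under the conditions $MT^2/8\le R$ and $LT^2/8<1$, then read off the $t^2$ expansion; your converse direction of the equivalence and uniqueness among all continuous solutions (not just those in the ball) fill in details the paper leaves implicit.
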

The next lemma establishes global existence.
\begin{lem}[Global existence of regular Nesterov trajectories]
\label{lem:global-existence}
Let $f\in C^2(\mathbb R^n)$. Suppose that
$f$ is bounded below. Then, for every $x_0\in\mathbb R^n$, there exists a unique
regular solution $X:[0,\infty) \to \R^{n}$ of Nesterov ODE with $X(0)=x_0,\dot{X}(0)=0$.
\end{lem}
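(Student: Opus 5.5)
Since $f\in C^2$, $\nabla f$ is locally Lipschitz, so Lemma \ref{lem:reg-nesterov-sol} gives a unique regular solution on some $[0,T]$. The plan is to take the maximal interval of existence $[0,T^\ast)$ and show $T^\ast=\infty$. On $(0,T^\ast)$ the ODE is regular, because the coefficient $3/t$ is smooth away from $0$. Standard ODE continuation therefore says that if $T^\ast<\infty$, then $\|X(t)\|+\|\dot X(t)\|\to\infty$ as $t\uparrow T^\ast$. The whole argument thus reduces to an a priori bound on $(X,\dot X)$ over bounded time intervals.

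For the bound I would use the energy $E(t)=\tfrac12\|\dot X(t)\|^2+f(X(t))$. Differentiating along the flow and using the ODE gives
\[
\dot E(t)=\langle \dot X,\ddot X+\nabla f(X)\rangle=-\frac{3}{t}\|\dot X(t)\|^2\le 0,\qquad t>0 .
\]
By continuity at $t=0$, where $\dot X(0)=0$ and $X(0)=x_0$, we get $E(t)\le E(0)=f(x_0)$ for all $t\in[0,T^\ast)$. One point needs care here: $E$ must be continuous at $0$. This follows from the integral representation in Lemma \ref{lem:reg-nesterov-sol}, which gives $\dot X(t)=-t^{-3}\int_0^t s^3\nabla f(X(s))\,ds\to 0$ as $t\to 0$.

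Let $m=\inf f>-\infty$. Then
\[
\tfrac12\|\dot X(t)\|^2\le f(x_0)-m ,
\]
so $\|\dot X(t)\|\le \sqrt{2(f(x_0)-m)}=:C$ on $[0,T^\ast)$. Consequently $\|X(t)\|\le\|x_0\|+Ct$, and on a finite interval $[0,T^\ast)$ both $X$ and $\dot X$ stay bounded. This contradicts the blow-up alternative, so $T^\ast=\infty$.

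Uniqueness on $[0,\infty)$ follows because any two regular solutions agree near $0$ by the uniqueness in Lemma \ref{lem:reg-nesterov-sol}. On $(0,\infty)$ they then agree by standard Picard--Lindelöf uniqueness, since the vector field $(x,v)\mapsto(v,-\tfrac3t v-\nabla f(x))$ is locally Lipschitz there. A connectedness argument on the set where the two solutions coincide finishes this step.

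The main obstacle is the behavior at the singular point $t=0$. The blow-up alternative must be applied only away from $0$, and the energy inequality must be justified up to $t=0$. Both are handled by Lemma \ref{lem:reg-nesterov-sol}: it provides the solution on $[0,T]$ with $\dot X\to0$, and it lets us restart as a regular ODE from $t=T/2$.
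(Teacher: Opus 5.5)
Your proposal is correct and follows the paper's proof essentially step for step. Both arguments use local existence from Lemma~\ref{lem:reg-nesterov-sol}, the dissipative energy $E(t)=\tfrac12\|\dot X(t)\|^2+f(X(t))$ with $E'(t)=-\tfrac{3}{t}\|\dot X(t)\|^2\le 0$, the bounds $\|\dot X(t)\|\le\sqrt{2(f(x_0)-m)}$ and $\|X(t)-x_0\|\le Ct$, and the standard continuation argument. Your extra remarks — continuity of $E$ at $t=0$, and uniqueness on $(0,\infty)$ via Picard--Lindel\"of away from the singular point — supply details the paper leaves implicit.
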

We now recall the basic convergence estimate for this flow. See for example \citep{su2016differential} for a proof.
\begin{thm}
\label{thm:nesterov}
Let $f:\R^n\to\R$ be a $C^1$, convex function, with finite minimum. Let \(X:[0,\infty)\to\R^n\) be the regular solution of,
\[
\ddot X(t)+\frac{3}{t}\dot X(t)+\nabla f(X(t))=0,
\qquad
X(0)=x_0,
\qquad
\dot X(0)=0.
\]
Then, for every \(x^\star\in\operatorname{argmin} f\),
\begin{equation} \label{eq:so2}
f(X(t))-f(x^\star)
\le
\frac{2\Norm{x_0-x^\star}^2}{t^2},
\qquad t>0.
\end{equation}
In particular,
\[
f(X(t))-f(x^{\star})
=
O\left(\frac{1}{t^2}\right).
\]
\end{thm}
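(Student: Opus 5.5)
The plan is the standard Lyapunov-energy argument of \citet{su2016differential}. Fix $x^\star\in\operatorname{argmin} f$ and define, for $t\ge 0$,
\[
\mathcal E(t) = t^2\bigl(f(X(t))-f(x^\star)\bigr) + 2\Norm{X(t)+\tfrac{t}{2}\dot X(t)-x^\star}^2 .
\]
At $t=0$ the first term vanishes. Since $\dot X(0)=0$ and $X$ is continuous at $0$ with $X(0)=x_0$ (Lemma \ref{lem:reg-nesterov-sol}), we get $\mathcal E(0)=2\Norm{x_0-x^\star}^2$. It then suffices to show that $\mathcal E$ is nonincreasing on $[0,\infty)$. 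That gives
\[
t^2\bigl(f(X(t))-f(x^\star)\bigr)\le \mathcal E(t)\le \mathcal E(0),
\]
because the quadratic term is nonnegative, and dividing by $t^2$ yields \eqref{eq:so2}.

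For monotonicity, differentiate on $t>0$, where $X$ is $C^2$.
\begin{itemize}
\item The first term contributes $2t(f(X)-f^\star)+t^2\langle\nabla f(X),\dot X\rangle$.
\item For the second term, set $V=X+\tfrac t2\dot X-x^\star$. Then
\[
\dot V=\tfrac32\dot X+\tfrac t2\ddot X=\tfrac t2\Bigl(\ddot X+\tfrac3t\dot X\Bigr)=-\tfrac t2\nabla f(X)
\]
by the ODE \eqref{eq:nesterov-ode}. This is exactly where the coefficient $3$ is used. Hence the second term contributes $4\langle V,\dot V\rangle=-2t\langle X-x^\star,\nabla f(X)\rangle-t^2\langle\dot X,\nabla f(X)\rangle$.
\end{itemize}
The $\langle\nabla f,\dot X\rangle$ terms cancel, leaving
\[
\dot{\mathcal E}(t)=2t\bigl(f(X)-f^\star-\langle\nabla f(X),X-x^\star\rangle\bigr)\le 0 .
\]
The inequality is convexity: $f(x^\star)\ge f(X)+\langle\nabla f(X),x^\star-X\rangle$.

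The main obstacle is the endpoint $t=0$, where the ODE is singular and $\mathcal E$ is only known to be differentiable on $(0,\infty)$. My plan is to integrate $\dot{\mathcal E}\le0$ on $[\epsilon,t]$ and let $\epsilon\to0$. This requires continuity of $\mathcal E$ at $0$, i.e.\ $t\dot X(t)\to0$ and $X(t)\to x_0$. Both follow from the integral representation in Lemma \ref{lem:reg-nesterov-sol}:
\[
\dot X(t)=-t^{-3}\int_0^t s^3\nabla f(X(s))\,ds=O(t).
\]
Existence of the trajectory on all of $[0,\infty)$ is assumed in the statement. If needed, it follows from Lemma \ref{lem:global-existence}, or from the same energy bound keeping $X$ bounded.
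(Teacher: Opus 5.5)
Your proposal is correct and follows essentially the same route as the paper. Both use the same Lyapunov function $\mathcal E(t)=t^2\bigl(f(X(t))-f(x^\star)\bigr)+2\Norm{X(t)+\tfrac t2\dot X(t)-x^\star}^2$, the identity $\dot Z=-\tfrac t2\nabla f(X)$, cancellation of the $\langle\nabla f(X),\dot X\rangle$ terms, convexity to get $\dot{\mathcal E}\le0$, and $\mathcal E(0)=2\Norm{x_0-x^\star}^2$. Your explicit $\epsilon\to0$ argument for continuity of $\mathcal E$ at the singular endpoint $t=0$ is extra care that the paper leaves implicit.
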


\section{Variational Characterization of Nesterov}
\label{sec:var-revisit}

In this section, we recall the variational characterization of Nesterov's method. We review the Lagrangian action studied in \citep{wibisono2016variational} and compute its first and second variations. Other works in this direction include \cite{muehlebach2019dynamical,muehlebach2021optimization}. \cite{wibisono2016variational} showed that the Nesterov trajectory can be viewed as a critical point of this action in the following sense.  For $a,b\in\R$ with $0<a<b$, consider the space of curves connecting $x_a$ to $x_b$,
\[
\mathcal{C}_{x_a,x_b}
:=
\left\{
\gamma\in C^2([a,b];\mathbb R^n):
\gamma(a)=x_a,\;
\gamma(b)=x_b
\right\}.
\]
For each such curve, define the action functional,

\begin{align}
\label{eq:defL}
\begin{split}
&\mathcal{L}:\mathcal{C}_{x_a,x_b} \to \R, \quad \\
&\mathcal{L}(\gamma)
=
\int_a^b
t^3
\left(
\frac{1}{2}\Norm{\dot{\gamma}(t)}^2
-
f(\gamma(t))
\right)\,dt.
\end{split}
\end{align} 
It is proved in \cite{wibisono2016variational} that a curve \(\gamma\) is a critical point of \(\mathcal{L}\) if and only if it satisfies the Nesterov ODE as in (\ref{eq:nesterov-ode}). We recall this fact in the next lemma.

\begin{lem}[First variation]
Let $f\in C^2(\R^n)$, and let $\mathcal{L}$ be defined as in (\ref{eq:defL}),
where $0<a<b$. For a fixed-endpoint variation,
\[
\gamma_\varepsilon(t)=\gamma(t)+\varepsilon h(t),
\qquad
h(a)=h(b)=0,
\]
the first variation of $\mathcal{L}$ at $\gamma$ is,
\[
\delta\mathcal{L}(\gamma)[h]
=
-\int_a^b
t^3
\left\langle
\ddot{\gamma}(t)
+
\frac{3}{t}\dot{\gamma}(t)
+
\nabla f(\gamma(t)),
h(t)
\right\rangle dt.
\]
Consequently, $\gamma$ is a critical point of $\mathcal{L}$ with respect to all fixed-endpoint variations if and only if,
\[
\ddot{\gamma}(t)
+
\frac{3}{t}\dot{\gamma}(t)
+
\nabla f(\gamma(t))
=
0.
\]
\end{lem}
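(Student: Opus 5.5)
We compute the first variation directly by differentiating $\varepsilon\mapsto\mathcal{L}(\gamma+\varepsilon h)$ at $\varepsilon=0$, integrating by parts once, and then applying the fundamental lemma of the calculus of variations.

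\textbf{Step 1 (differentiation under the integral).} Since $[a,b]$ is compact, $\gamma,h\in C^2$ and $f\in C^2$, the integrand $t^3\bigl(\tfrac12\Norm{\dot\gamma+\varepsilon\dot h}^2-f(\gamma+\varepsilon h)\bigr)$ is $C^1$ in $(t,\varepsilon)$. We may therefore differentiate under the integral sign, which gives
\[
\delta\mathcal{L}(\gamma)[h]=\frac{d}{d\varepsilon}\Big|_{\varepsilon=0}\mathcal{L}(\gamma_\varepsilon)=\int_a^b t^3\Bigl(\langle\dot\gamma,\dot h\rangle-\langle\nabla f(\gamma),h\rangle\Bigr)dt .
\]

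\textbf{Step 2 (integration by parts).} Write $t^3\langle\dot\gamma,\dot h\rangle=\frac{d}{dt}\langle t^3\dot\gamma,h\rangle-\langle \frac{d}{dt}(t^3\dot\gamma),h\rangle$. The boundary term $[\langle t^3\dot\gamma,h\rangle]_a^b$ vanishes because $h(a)=h(b)=0$. Moreover,
\[
\frac{d}{dt}(t^3\dot\gamma)=t^3\Bigl(\ddot\gamma+\tfrac{3}{t}\dot\gamma\Bigr),
\]
which is legitimate since $t\ge a>0$. Collecting terms yields the stated formula
\[
\delta\mathcal{L}(\gamma)[h]=-\int_a^b t^3\Bigl\langle \ddot\gamma+\tfrac3t\dot\gamma+\nabla f(\gamma),h\Bigr\rangle dt .
\]

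\textbf{Step 3 (characterization of critical points).} If $\gamma$ solves the Nesterov ODE, the integrand vanishes identically, so $\delta\mathcal{L}(\gamma)[h]=0$ for all admissible $h$. For the converse, set
\[
R(t):=t^3\bigl(\ddot\gamma+\tfrac3t\dot\gamma+\nabla f(\gamma)\bigr),
\]
which is continuous on $[a,b]$. Suppose $R(t_0)\neq0$ for some $t_0\in(a,b)$. By continuity $\langle R,R(t_0)\rangle>0$ on a small interval $(t_0-\delta,t_0+\delta)\subset(a,b)$. Choose a smooth bump $\phi\ge0$ supported in that interval with $\phi(t_0)>0$, and take $h=\phi\,R(t_0)$; this $h$ is $C^2$ and vanishes at the endpoints. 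Then $\int_a^b\langle R,h\rangle\,dt>0$, which contradicts criticality. Hence $R\equiv0$ on $(a,b)$, and by continuity on $[a,b]$. Since $t^3>0$, this is exactly the Nesterov ODE.

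The argument is routine; the only point needing care is Step 3, where the test variations must lie in the admissible class ($C^2$ with zero endpoint values). Smooth compactly supported bumps handle this, and the positivity of $t^3$ on $[a,b]$ (ensured by $a>0$) lets us strip the weight.
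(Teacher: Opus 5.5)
Your proposal is correct and follows essentially the same route as the paper: differentiate under the integral sign, integrate the kinetic term by parts using $h(a)=h(b)=0$ and $\frac{d}{dt}(t^3\dot\gamma)=t^3(\ddot\gamma+\frac{3}{t}\dot\gamma)$, then conclude with the fundamental lemma of the calculus of variations. The only difference is that you prove the fundamental lemma explicitly with an admissible $C^2$ bump variation $h=\phi\,R(t_0)$, where the paper simply cites it.
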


\begin{proof}
Consider the variation,
\[
\gamma_\varepsilon=\gamma+\varepsilon h.
\]
Differentiating the action with respect to $\varepsilon$ gives,
\[
\begin{aligned}
\left.
\frac{d}{d\varepsilon}
\mathcal{L}(\gamma_\varepsilon)
\right|_{\varepsilon=0}
&=
\int_a^b
t^3
\left(
\left\langle \dot{\gamma}(t),\dot{h}(t)\right\rangle
-
\left\langle\nabla f(\gamma(t)),h(t)\right\rangle
\right)dt.
\end{aligned}
\]
Integrating the first term by parts, and using $h(a)=h(b)=0$, we obtain,
\[
\begin{aligned}
\int_a^b
t^3
\left\langle\dot{\gamma},\dot{h}\right\rangle dt
&=
-\int_a^b
\left\langle
\frac{d}{dt}\bigl(t^3\dot{\gamma}\bigr),
h
\right\rangle dt \\
&=
-\int_a^b
t^3
\left\langle
\ddot{\gamma}
+
\frac{3}{t}\dot{\gamma},
h
\right\rangle dt.
\end{aligned}
\]
Therefore,
\[
\delta\mathcal{L}(\gamma)[h]
=
-\int_a^b
t^3
\left\langle
\ddot{\gamma}
+
\frac{3}{t}\dot{\gamma}
+
\nabla f(\gamma),
h
\right\rangle dt.
\]
Since this vanishes for every fixed-endpoint variation $h$, the fundamental lemma of the calculus of variations implies,
\[
\ddot{\gamma}
+
\frac{3}{t}\dot{\gamma}
+
\nabla f(\gamma)
=
0.
\]
Conversely, if $\gamma$ satisfies this ODE, then the first variation vanishes for every such $h$.
\end{proof}
A natural question is when the Nesterov trajectory is actually a minimizer of \(\mathcal{L}\). This question was raised and discussed, for instance, in \citep{jordan2018dynamical,zhang2021rethinking}. To study this question, one needs to compute the second variation of \(\mathcal{L}\). More specifically, we record the following lemma.
\begin{lem}[Second variation]
\label{lem:second-variation}
Let $f\in C^2(\R^n)$, and let $\mathcal{L}(\gamma)$ be defined as in \eqref{eq:defL}, 
where $0<a<b$. For a fixed-endpoint variation,
\[
\gamma_\varepsilon(t)=\gamma(t)+\varepsilon h(t),
\qquad
h(a)=h(b)=0,
\]
the second variation of $\mathcal{L}$ at $\gamma$ in the direction $h$ is,
\[
\delta^2\mathcal{L}(\gamma)[h,h]
=
\int_a^b
t^3
\left(
\Norm{\dot h(t)}^2
-
\left\langle
\nabla^2 f(\gamma(t))h(t),h(t)
\right\rangle
\right)\,dt.
\]
\end{lem}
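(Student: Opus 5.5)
The plan is to compute $\frac{d^2}{d\varepsilon^2}\mathcal{L}(\gamma_\varepsilon)\big|_{\varepsilon=0}$ directly, following the same pattern as the first-variation computation above. Since $f\in C^2(\R^n)$ and $[a,b]\subset(0,\infty)$ is compact, the integrand
\[
\phi(t,\varepsilon):=t^3\left(\tfrac12\Norm{\dot\gamma(t)+\varepsilon\dot h(t)}^2-f(\gamma(t)+\varepsilon h(t))\right)
\]
is $C^2$ in $\varepsilon$. Moreover, $\phi$ and its first two $\varepsilon$-derivatives are continuous in $(t,\varepsilon)$ on $[a,b]\times[-1,1]$. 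This holds because $\gamma,h$ are $C^1$ on the compact interval, so the sets $\{\gamma(t)+\varepsilon h(t)\}$ stay in a compact set on which $\nabla f$ and $\nabla^2 f$ are continuous and bounded. Hence differentiation under the integral sign is justified, for instance by dominated convergence or the standard Leibniz rule for continuous integrands on compact sets.

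I then differentiate the two pieces separately. The kinetic part is a quadratic polynomial in $\varepsilon$:
\[
\tfrac12\Norm{\dot\gamma+\varepsilon\dot h}^2=\tfrac12\Norm{\dot\gamma}^2+\varepsilon\langle\dot\gamma,\dot h\rangle+\tfrac{\varepsilon^2}{2}\Norm{\dot h}^2,
\]
so its second $\varepsilon$-derivative is exactly $\Norm{\dot h}^2$. For the potential part, the chain rule gives $\frac{d}{d\varepsilon}f(\gamma+\varepsilon h)=\langle\nabla f(\gamma+\varepsilon h),h\rangle$. Differentiating once more gives $\frac{d^2}{d\varepsilon^2}f(\gamma+\varepsilon h)=\langle\nabla^2 f(\gamma+\varepsilon h)h,h\rangle$. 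Evaluating at $\varepsilon=0$ and multiplying by the weight $t^3$ yields the stated formula
\[
\delta^2\mathcal{L}(\gamma)[h,h]=\int_a^b t^3\left(\Norm{\dot h}^2-\langle\nabla^2 f(\gamma)h,h\rangle\right)dt.
\]

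Two remarks finish the argument. First, the endpoint conditions $h(a)=h(b)=0$ are not used in the computation itself. They only ensure that $\gamma_\varepsilon\in\mathcal{C}_{x_a,x_b}$, so that the second variation is a variation within the admissible class. Second, no integration by parts is needed, unlike the first-variation lemma.

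The only step requiring any care is justifying the interchange of $\frac{d^2}{d\varepsilon^2}$ and $\int_a^b$. This follows from the compactness and continuity argument above, so I do not expect a genuine obstacle.
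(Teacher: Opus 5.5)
Your proposal is correct and matches the paper's proof: differentiate $\mathcal{L}(\gamma_\varepsilon)$ twice in $\varepsilon$ under the integral sign and evaluate at $\varepsilon=0$. Your justification of exchanging the $\varepsilon$-derivative with the integral (by continuity on the compact set $[a,b]\times[-1,1]$) is a detail the paper leaves implicit.
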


\begin{proof}
For the variation,
\[
\gamma_\varepsilon=\gamma+\varepsilon h,
\]
we have,
\[
\mathcal{L}(\gamma_\varepsilon)
=
\int_a^b
t^3
\left(
\frac12
\Norm{\dot\gamma+\varepsilon\dot h}^2
-
f(\gamma+\varepsilon h)
\right)\,dt.
\]
Differentiating once with respect to $\varepsilon$, we obtain,
\[
\frac{d}{d\varepsilon}
\mathcal{L}(\gamma_\varepsilon)
=
\int_a^b
t^3
\left(
\left\langle
\dot\gamma+\varepsilon\dot h,\dot h
\right\rangle
-
\left\langle
\nabla f(\gamma+\varepsilon h),h
\right\rangle
\right)\,dt.
\]
Differentiating once more gives,
\[
\frac{d^2}{d\varepsilon^2}
\mathcal{L}(\gamma_\varepsilon)
=
\int_a^b
t^3
\left(
\Norm{\dot h}^2
-
\left\langle
\nabla^2 f(\gamma+\varepsilon h)h,h
\right\rangle
\right)\,dt.
\]
Evaluating at $\varepsilon=0$, we obtain,
\[
\delta^2\mathcal{L}(\gamma)[h,h]
=
\int_a^b
t^3
\left(
\Norm{\dot h}^2
-
\left\langle
\nabla^2 f(\gamma)h,h
\right\rangle
\right)\,dt.
\]
\end{proof}
Intuitively speaking, when the second variation along a Nesterov trajectory is positive (more precisely, coercive), all sufficiently nearby, distinct curves with the same endpoints have larger values of the Nesterov functional. We now recall the definition of the required function space so that these notions can be studied precisely.
\begin{deff}[$H^1$ and $H_0^1$ spaces]
\label{deff:sobolev-space}
Let $0<a<b<\infty$. We define,
\[
H^1((a,b);\R^n)
:=
\left\{
h:(a,b)\to\R^n:
h,\dot h\in L^2((a,b);\R^n)
\right\},
\]
equipped with the norm,
\[
\|h\|_{H^1(a,b)}^2
:=
\int_a^b
\left(
\|h(t)\|^2
+
\|\dot h(t)\|^2
\right)\,dt.
\]

We also define,
\[
H_0^1((a,b);\R^n)
:=
\left\{
h\in H^1((a,b);\R^n):
h(a)=h(b)=0
\right\}.
\]
Here $\dot h$ denotes the weak derivative. Endpoint values
are understood using the continuous representative of
$h\in H^1((a,b);\R^n)$ on $[a,b]$.
\end{deff}
\begin{rmk}
Note that $\mathcal L$ naturally extends to
$H^1((a,b);\mathbb R^n)$ with the prescribed
endpoints because
$H^1((a,b);\mathbb R^n)$ embeds into
$C^0([a,b];\mathbb R^n)$ with an absolutely continuous representative. We sometimes use this extension.
\end{rmk}
\begin{deff}[Local minimizer on a finite interval]
Let $0<a<b<\infty$, and let,
\[
\mathcal{C}_{x_a,x_b}
:=
\left\{
\gamma\in C^2([a,b];\mathbb R^n):
\gamma(a)=x_a,\;
\gamma(b)=x_b
\right\}.
\]
A curve $\gamma\in\mathcal{C}_{x_a,x_b}$ is called a \emph{local minimizer of $\mathcal{L}$ on $[a,b]$} if there exists $\varepsilon>0$ such that
\[
\mathcal{L}(\gamma)
\le
\mathcal{L}(\eta),
\]
for every $\eta\in\mathcal{C}_{x_a,x_b}$ satisfying
\[
\|\eta-\gamma\|_{H^1}<\varepsilon.
\]
It is called a \emph{strict local minimizer of $\mathcal{L}$ on $[a,b]$} if the inequality is strict whenever $\eta\neq\gamma$.
\end{deff}
We state the following lemma. The proof is standard and we defer it to Appendix \ref{sec:proof}.
\begin{lem}[Coercive second variation implies local minimality]
\label{lem:Coercive-second-variation-local-minimality}
Let $f\in C^2(\R^n)$, let $0<a<b<\infty$, and let
$\gamma:[a,b]\to\R^n$ be a critical point of,
\[
\mathcal{L}(\eta)
=
\int_a^b
t^3
\left(
\frac12\Norm{\dot\eta(t)}^2-f(\eta(t))
\right)\,dt,
\]
with respect to fixed-endpoint variations. Suppose that the second
variation is coercive on $H_0^1((a,b);\R^n)$; that is, there exists
$c>0$ such that,
\[
\delta^2\mathcal{L}(\gamma)[h,h]
\geq
c\Norm{h}_{H^1(a,b)}^2,
\]
for every $h\in H_0^1((a,b);\R^n)$. Then $\gamma$ is a strict $H^1$-local minimizer of $\mathcal{L}$ on $[a,b]$.
More precisely, there exists $\varepsilon>0$ such that,
$\mathcal{L}(\gamma+h)>\mathcal{L}(\gamma)$, for every nonzero $h\in H_0^1((a,b);\R^n)$ satisfying, $\Norm{h}_{H^1(a,b)}<\varepsilon$.
\end{lem}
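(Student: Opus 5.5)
The plan is a second-order Taylor expansion of $\mathcal{L}$ about $\gamma$, with a remainder controlled uniformly by the embedding $H^1\hookrightarrow C^0$. Write $\eta=\gamma+h$ with $h\in H_0^1((a,b);\R^n)$. Expanding the kinetic term exactly gives
\[
\mathcal{L}(\gamma+h)-\mathcal{L}(\gamma)
=
\delta\mathcal{L}(\gamma)[h]
+\frac12\int_a^b t^3\Norm{\dot h}^2\,dt
-\int_a^b t^3\Bigl(f(\gamma+h)-f(\gamma)-\langle\nabla f(\gamma),h\rangle\Bigr)dt .
\]
Since $\gamma$ is a critical point, the first-variation term vanishes. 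This follows from the first variation lemma, extended from smooth $h$ to $h\in H_0^1$: the integrand $t^3(\langle\dot\gamma,\dot h\rangle-\langle\nabla f(\gamma),h\rangle)$ is continuous in $h$ in the $H^1$ norm, and smooth compactly supported functions are dense in $H_0^1$.

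Next I apply Taylor's theorem with integral remainder to $f$ along the segment from $\gamma(t)$ to $\gamma(t)+h(t)$:
\[
f(\gamma+h)-f(\gamma)-\langle\nabla f(\gamma),h\rangle=\int_0^1(1-\theta)\langle\nabla^2 f(\gamma+\theta h)h,h\rangle\,d\theta .
\]
Combining this with the formula of Lemma \ref{lem:second-variation}, I obtain
\[
\mathcal{L}(\gamma+h)-\mathcal{L}(\gamma)=\frac12\delta^2\mathcal{L}(\gamma)[h,h]-R(h),
\]
where
\[
R(h)=\int_a^b t^3\int_0^1(1-\theta)\bigl\langle(\nabla^2 f(\gamma+\theta h)-\nabla^2 f(\gamma))h,h\bigr\rangle\,d\theta\,dt .
\]

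The main step is the estimate $|R(h)|=o(\Norm{h}_{H^1}^2)$. The one-dimensional Sobolev embedding gives $\sup_{[a,b]}\Norm{h}\le C_{a,b}\Norm{h}_{H^1(a,b)}$, for instance from $\Norm{h(t)}\le\int_a^b\Norm{\dot h}\le\sqrt{b-a}\,\Norm{h}_{H^1}$ using $h(a)=0$. Because $\gamma([a,b])$ is compact and $\nabla^2 f$ is continuous, $\nabla^2 f$ is uniformly continuous on the closed $1$-neighborhood $K$ of $\gamma([a,b])$. So for any $\epsilon>0$ there is $\delta\in(0,1]$ with
\[
\Norm{\nabla^2 f(y)-\nabla^2 f(\gamma(t))}\le\epsilon
\quad\text{whenever } \Norm{y-\gamma(t)}\le\delta .
\]
If $\Norm{h}_{H^1}<\delta/C_{a,b}$, this yields
\[
|R(h)|\le\frac{\epsilon}{2}\,b^3\int_a^b\Norm{h}^2\,dt\le\frac{\epsilon b^3}{2}\Norm{h}_{H^1}^2 .
\]

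Finally, choose $\epsilon$ so that $\epsilon b^3/2<c/4$ and take $\varepsilon=\delta/C_{a,b}$. Then the coercivity hypothesis gives, for every nonzero $h$ with $\Norm{h}_{H^1}<\varepsilon$,
\[
\mathcal{L}(\gamma+h)-\mathcal{L}(\gamma)\ge\frac{c}{2}\Norm{h}_{H^1}^2-\frac{c}{4}\Norm{h}_{H^1}^2=\frac{c}{4}\Norm{h}_{H^1}^2>0 .
\]
This is the claimed strict local minimality. It applies in particular to $C^2$ competitors $\eta\in\mathcal{C}_{x_a,x_b}$, since for these $h=\eta-\gamma\in H_0^1$. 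The only delicate point is the uniformity in the remainder estimate, which is exactly what the sup-norm control from $H^1$ provides. No bound on the Hessian beyond continuity is needed, because all the points $\gamma+\theta h$ stay in the compact set $K$.
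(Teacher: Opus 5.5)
Your proposal is correct and follows essentially the same route as the paper's proof. Both arguments use a Taylor expansion with integral remainder and uniform continuity of $\nabla^2 f$ on a compact neighborhood of $\gamma([a,b])$. Both use the embedding $H^1\hookrightarrow C^0$ to make the remainder $o(\Norm{h}_{H^1}^2)$, and then absorb it into the coercive term to reach the same lower bound $\frac{c}{4}\Norm{h}_{H^1}^2$. The differences are cosmetic refinements: you phrase things with $\epsilon$--$\delta$ rather than the paper's modulus of continuity $\omega$, and you give an explicit density argument that $\delta\mathcal{L}(\gamma)[h]=0$ for all $h\in H_0^1$, which the paper simply asserts.
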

\begin{rmk}
Note that on a finite interval $[a,b]$, coercivity of the
second variation is equivalent to strict positivity
(see Lemma~\ref{lem:strict-positivity-implies-coercivity}),
so Lemma~\ref{lem:Coercive-second-variation-local-minimality}
also holds when the second variation is only assumed to be
strictly positive.
\end{rmk}

\section{Schr\"odinger Operators and Eventual Local Minimality}
\label{sec:shrod}
We need an unweighted version of the second-variation formula to better understand its positivity. The following proposition gives another characterization of the second variation of the Nesterov functional. The key here is a change of variables that leads to an unweighted representation. This has several consequences. First, it links coercivity of the second variation to positivity of a certain Schr\"odinger operator (see Proposition \ref{prop:schrodinger}). Second, it reveals that $t^{2}\nabla^{2}f(X(t))$, rather than $\nabla^{2}f(X(t))$, is the dimensionless quantity along the flow that plays a key role in determining whether the trajectory is a saddle point or a local minimizer. This result makes use of the change of variables \eqref{eq:cvar}.
\begin{prop}
\label{prop:logaritmic-change-variable}
Let \(\gamma:[a,b]\to\R^n\), with \(0<a<b<\infty\), be a critical point of the Nesterov functional,
\[
\mathcal{L}(\gamma)
=
\int_a^b
t^3
\left(
\frac12\Norm{\dot \gamma(t)}^2-f(\gamma(t))
\right)\,dt.
\]
For \(h\in H_0^1((a,b);\R^n)\), from Lemma \ref{lem:second-variation}, the second variation is,
\[
Q_\gamma[h]
=
\int_a^b
t^3
\left(
\Norm{\dot h(t)}^2
-
\left\langle
\nabla^2 f(\gamma(t))h(t),h(t)
\right\rangle
\right)\,dt.
\]
For logarithmic time $s=\log t$ and define \(u\) by $u(s)=th(t)$, as in \eqref{eq:cvar}. 
Then defining the matrix curve $K(s):=e^{2s}\nabla^2f(\gamma(e^s))$, we have,
\[
Q_\gamma[h]
=
\int_{\log a}^{\log b}
\left[
\Norm{\dot u(s)}^2
+
\left\langle
(I-K(s))u(s),u(s)
\right\rangle
\right]\,ds.
\]
\end{prop}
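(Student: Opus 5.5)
Substitute $t=e^s$ and $h(t)=e^{-s}u(s)$ directly into $Q_\gamma[h]$. Then simplify the kinetic term by a single integration by parts, using the Dirichlet conditions $u(\log a)=u(\log b)=0$, which follow from $h(a)=h(b)=0$. The potential term needs no work beyond bookkeeping of powers of $e^s$. Note that the criticality of $\gamma$ plays no role; the identity is purely a change of variables in the quadratic form.

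\textbf{Potential term.} Since $dt=e^s\,ds$, we have $t^3\,dt=e^{4s}\,ds$. Also $\|h\|^2=e^{-2s}\|u\|^2$. Hence
\[
\int_a^b t^3\langle \nabla^2 f(\gamma(t))h,h\rangle\,dt=\int_{\log a}^{\log b} e^{2s}\langle \nabla^2 f(\gamma(e^s))u,u\rangle\,ds=\int_{\log a}^{\log b}\langle K(s)u,u\rangle\,ds .
\]

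\textbf{Kinetic term.} Differentiating $h(t)=e^{-s}u(s)$ with $ds/dt=e^{-s}$ gives
\[
\dot h(t)=e^{-2s}\bigl(u'(s)-u(s)\bigr),
\]
where $'$ denotes $d/ds$ (written $\dot u$ in the statement). Therefore
\[
\int_a^b t^3\|\dot h\|^2\,dt=\int_{\log a}^{\log b}\bigl(\|u'\|^2-2\langle u',u\rangle+\|u\|^2\bigr)\,ds .
\]
The cross term equals $-\int \frac{d}{ds}\|u\|^2\,ds=-\bigl[\|u\|^2\bigr]_{\log a}^{\log b}=0$ by the boundary conditions. What remains is $\int(\|u'\|^2+\|u\|^2)\,ds$. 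Combining this with the potential term yields exactly the claimed formula with $I-K(s)$.

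\textbf{Regularity.} The main point requiring care is justifying these steps for $h\in H_0^1$ rather than for smooth $h$. The map $h\mapsto u$ is multiplication by the smooth, positive function $t$ composed with the diffeomorphism $s=\log t$ of compact intervals. It therefore maps $H_0^1((a,b))$ bijectively onto $H_0^1((\log a,\log b))$, and the chain and product rules hold for weak derivatives. Since $\|u\|^2$ is absolutely continuous, the fundamental theorem of calculus applies to $\frac{d}{ds}\|u\|^2=2\langle u',u\rangle$.

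Alternatively, one can prove the identity for $h\in C_c^\infty$ and pass to the limit by density. Both sides are continuous in the $H^1$ norm because $\nabla^2 f(\gamma(\cdot))$ is bounded on the compact interval $[a,b]$.
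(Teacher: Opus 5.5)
Your proposal is correct and follows the paper's proof essentially step for step. It uses the same substitution $h(t)=t^{-1}u(\log t)$, the same formula $\dot h=t^{-2}(\dot u-u)$ giving $\int_a^b t^3\|\dot h\|^2\,dt=\int_{\log a}^{\log b}\|\dot u-u\|^2\,ds$, the same $e^{2s}$ bookkeeping for the Hessian term, and the same vanishing of the cross term $-2\langle \dot u,u\rangle$ via the Dirichlet conditions. Your $H_0^1$ justification (bijectivity of $h\mapsto u$, or density of smooth functions) makes explicit a point the paper leaves implicit.
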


\begin{proof}
Since $h(t)=t^{-1}u(s)$, we have,
\[
\dot h(t)
=
-\frac{1}{t^2}u(s)
+
\frac{1}{t}\dot u(s)\frac{ds}{dt} 
=
\frac{1}{t^2}\bigl(\dot u(s)-u(s)\bigr).
\]
Therefore, we have,
\[
\begin{aligned}
\int_a^b t^3\Norm{\dot h(t)}^2\,dt
&=
\int_{\log a}^{\log b}
\Norm{\dot u(s)-u(s)}^2\,ds.
\end{aligned}
\]
Similarly,
\[
\int_a^b
t^3
\left\langle
\nabla^2f(\gamma(t))h(t),h(t)
\right\rangle dt 
=
\int_{\log a}^{\log b}
e^{2s}
\left\langle
\nabla^2f(\gamma(e^s))u(s),u(s)
\right\rangle ds,
\]
so we have that,
\[
Q_\gamma[h]
=
\int_{\log a}^{\log b}
\left[
\Norm{\dot u-u}^2
-
e^{2s}
\left\langle
\nabla^2f(\gamma(e^s))u,u
\right\rangle
\right]ds.
\]
For  the first term in the integrand, we have,
\[
\Norm{\dot u-u}^2
=
\Norm{\dot u}^2+\Norm{u}^2
-
2\langle\dot u,u\rangle.
\]
Since,
\[
2\langle\dot u,u\rangle
=
\frac{d}{ds}\Norm{u}^2,
\]
and \(u(\log a)=u(\log b)=0\), the cross term integrates to zero, that is,
\[
\int_{\log a}^{\log b}
2\langle\dot u,u\rangle\,ds
=
\left[\Norm{u(s)}^2\right]_{\log a}^{\log b}
=
0.
\]
It follows that,
\[
Q_\gamma[h]
=
\int_{\log a}^{\log b}
\left[
\Norm{\dot u(s)}^2
+
\left\langle
\left(
I-e^{2s}\nabla^2 f(\gamma(e^s))
\right)u(s),u(s)
\right\rangle
\right]ds.
\]
as claimed.
\end{proof}
As an application of Proposition \ref{prop:logaritmic-change-variable}, the next proposition relates positivity of the second variation to positivity of the associated Schr\"odinger operator acting on the corresponding space of variations.
\begin{prop}
\label{prop:schrodinger}
Let $f\in C^2(\R^n)$, let $0<a<b<\infty$, and let
$\gamma\in C^2([a,b];\R^n)$ be a critical point of the
Nesterov action with respect to fixed-endpoint variations.
Define the associated Schr\"odinger operator,
\[
\mathcal H_\gamma
:=
-\frac{d^2}{ds^2}
+
I-e^{2s}\nabla^2 f(\gamma(e^s)),
\]
on $[\log a,\log b]$ with Dirichlet boundary conditions. Then the second variation of the Nesterov action is coercive
on $H_0^1((a,b);\R^n)$ if and only if,
$\lambda_{\text{min}}(\mathcal H_\gamma)>0$.
\end{prop}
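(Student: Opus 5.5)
The plan is to transport everything to logarithmic time via Proposition \ref{prop:logaritmic-change-variable} and then use the variational (Rayleigh quotient) characterization of the bottom of the spectrum. Set $\alpha=\log a$ and $\beta=\log b$. Let $\Phi:h\mapsto u$ with $u(s)=e^s h(e^s)$. First I check that $\Phi$ is a linear bijection from $H_0^1((a,b);\R^n)$ onto $H_0^1((\alpha,\beta);\R^n)$, and that it is bounded with bounded inverse. This holds because on the compact interval $[a,b]$ the weights $t$, $t^{\pm1}$, $t^{\pm 2}$ and the Jacobian $dt/ds=e^s$ are bounded above and below by positive constants. Moreover, the formula $\dot h(t)=t^{-2}(\dot u-u)$ from that proof shows $h\in H^1$ if and only if $u\in H^1$. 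Hence there are constants $0<c_1\le c_2$, depending only on $a,b$, with
\[
c_1\Norm{u}_{H^1(\alpha,\beta)}^2\le \Norm{h}_{H^1(a,b)}^2\le c_2\Norm{u}_{H^1(\alpha,\beta)}^2 .
\]
Consequently, coercivity of $Q_\gamma$ on $H_0^1((a,b))$ is equivalent to coercivity of the quadratic form
\[
q[u]=\int_\alpha^\beta \Norm{\dot u}^2+\langle (I-K(s))u,u\rangle\,ds
\]
on $H_0^1((\alpha,\beta))$.

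Next I identify $q$ as the closed form of $\mathcal H_\gamma$ with Dirichlet conditions. Since $\gamma\in C^2$ and $f\in C^2$, the potential $V(s)=I-K(s)$ is a continuous symmetric matrix function on $[\alpha,\beta]$, hence bounded. Then $\mathcal H_\gamma$ is self-adjoint on $L^2$ with domain $H^2\cap H_0^1$, has compact resolvent, and its form is $q$. By the min--max principle,
\[
\lambda_{\min}(\mathcal H_\gamma)=\inf_{0\ne u\in H_0^1}\frac{q[u]}{\Norm{u}_{L^2}^2},
\]
and the infimum is attained by an eigenfunction. This uses compactness of $H_0^1\hookrightarrow L^2$ together with weak lower semicontinuity of $q$, which holds because $\int\Norm{\dot u}^2$ is weakly l.s.c. and the potential term is weakly continuous under the compact embedding. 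If one prefers not to invoke spectral theory, one may take this Rayleigh quotient as the definition of $\lambda_{\min}$ and prove attainment directly by this argument.

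For the forward direction, suppose $q[u]\ge c\Norm{u}_{H^1}^2$. Then $q[u]\ge c\Norm{u}_{L^2}^2$, so $\lambda_{\min}\ge c>0$.

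The converse is the main step. Suppose $\lambda_{\min}=\lambda>0$, so that $q[u]\ge\lambda\Norm{u}_{L^2}^2$. Let $M=\sup_s\Norm{V(s)}$. For $\theta\in(0,1)$ write
\[
q[u]=\theta q[u]+(1-\theta)q[u]\ge \theta\bigl(\Norm{\dot u}_{L^2}^2-M\Norm{u}_{L^2}^2\bigr)+(1-\theta)\lambda\Norm{u}_{L^2}^2 .
\]
Choose $\theta$ small enough that $(1-\theta)\lambda-\theta M\ge\theta$. This gives $q[u]\ge\theta\Norm{u}_{H^1}^2$, which is the desired coercivity. Transferring it back through $\Phi$ yields coercivity of $Q_\gamma$.

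The only delicate point is justifying that the infimum defining $\lambda_{\min}$ is positive and matches the spectral bottom. This is exactly what the interpolation argument above bypasses, since it needs only the inequality $q\ge\lambda\Norm{u}_{L^2}^2$. This same argument also proves Lemma~\ref{lem:strict-positivity-implies-coercivity}-type statements.
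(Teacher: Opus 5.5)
Your proposal is correct and follows essentially the paper's route. You transfer to logarithmic time via $u(s)=e^s h(e^s)$ and read $\lambda_{\min}$ as the Rayleigh-quotient infimum of $q$. You then upgrade $q[u]\ge\lambda\|u\|_{L^2}^2$ to an $H^1$ bound using $M=\sup_s\|V(s)\|$; your choice $\theta=\lambda/(1+M+\lambda)$ gives exactly the paper's constant $\lambda/(\lambda+M+1)$. Finally you compare the $H^1$ norms of $h$ and $u$ on the compact interval. The only cosmetic difference is that you state a two-sided norm equivalence abstractly, whereas the paper computes the two one-sided estimates it needs, $\|h\|_{H^1}^2\le(a^{-1}+2a^{-3})\|u\|_{H^1}^2$ and $\|u\|_{L^2}^2\le b\|h\|_{H^1}^2$, explicitly.
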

\begin{proof}
Set $V_\gamma(s)
:=
I-e^{2s}\nabla^2f(\gamma(e^s))$. For $h\in H_0^1((a,b);\R^n)$, define,
\[
u(s):=e^s h(e^s).
\]
This change of variables maps $H_0^1((a,b);\R^n)$ bijectively
onto $H_0^1((\log a,\log b);\R^n)$. In particular,
\[
u(\log a)=a h(a)=0,
\qquad
u(\log b)=b h(b)=0.
\]
By Proposition \ref{prop:logaritmic-change-variable},
\[
Q_\gamma[h]=q_\gamma[u]:
=\int_{\log a}^{\log b}
\left(
\Norm{u_s}^2
+
\left\langle V_\gamma(s)u,u\right\rangle
\right)\,ds.
\]

For $u\in H^2((\log a,\log b);\R^n)\cap H_0^1((\log a,\log b);\R^n)$,
integration by parts gives,
\[
\begin{aligned}
\left\langle u,-u_{ss}\right\rangle_{L^2}
&=
-\left[\left\langle u,u_s\right\rangle\right]_{\log a}^{\log b}
+
\int_{\log a}^{\log b}\Norm{u_s}^2\,ds\\
&=
\int_{\log a}^{\log b}\Norm{u_s}^2\,ds.
\end{aligned}
\]
Thus, for such $u$,
\[
Q_\gamma[h]=
\left\langle
u,\mathcal H_\gamma u
\right\rangle_{L^2}.
\]
For general $u\in H_0^1((\log a,\log b);\R^n)$, \(q_\gamma[u]\) is the quadratic form associated with \(\mathcal{H}_\gamma\). Suppose first that
$\lambda:=\lambda_{\text{min}}(\mathcal H_\gamma)>0$.
The Rayleigh quotient characterization gives,
\[
Q_\gamma[h]
=
q_\gamma[u]
\ge
\lambda\Norm{u}_{L^2(\log a,\log b)}^2.
\]
Next, we show how this $L^2$ lower bound yields coercivity in the $H^1$ norm. Since $V_\gamma$ is continuous on $[\log a,\log b]$, the constant,
\[
M:=\sup_{s\in[\log a,\log b]}\Norm{V_\gamma(s)}_{\mathrm{op}}
\]
is finite. Consequently,
\[
\begin{aligned}
\Norm{u_s}_{L^2(\log a,\log b)}^2
&=
q_\gamma[u]
-
\int_{\log a}^{\log b}\left\langle V_\gamma(s)u,u\right\rangle\,ds\\
&\le
q_\gamma[u]+M\Norm{u}_{L^2(\log a,\log b)}^2\\
&\le
\left(1+\frac{M}{\lambda}\right)q_\gamma[u].
\end{aligned}
\]
Adding the bound
$\Norm{u}_{L^2(\log a,\log b)}^2\le\lambda^{-1}q_\gamma[u]$ gives,
\[
\Norm{u}_{H^1(\log a,\log b)}^2
\le
\left(1+\frac{M+1}{\lambda}\right)q_\gamma[u].
\]
To compare this with the norm of $h$, observe that,
\[
h(t)=t^{-1}u(\log t),
\qquad
\dot h(t)=t^{-2}\bigl(u_s(\log t)-u(\log t)\bigr).
\]
Using $t=e^s$ and $dt=e^s\,ds$, we obtain,
\[
\begin{aligned}
\Norm{h}_{H^1(a,b)}^2
&=
\int_{\log a}^{\log b} e^{-s}\Norm{u}^2\,ds
+
\int_{\log a}^{\log b} e^{-3s}\Norm{u_s-u}^2\,ds\\
&\le
a^{-1}\Norm{u}_{L^2(\log a,\log b)}^2
+
2a^{-3}
\left(
\Norm{u_s}_{L^2(\log a,\log b)}^2
+
\Norm{u}_{L^2(\log a,\log b)}^2
\right)\\
&\le
\left(a^{-1}+2a^{-3}\right)
\Norm{u}_{H^1(\log a,\log b)}^2.
\end{aligned}
\]
Combining these estimates yields,
\[
Q_\gamma[h]
\ge
c\Norm{h}_{H^1(a,b)}^2,
\]
where,
\[
c
:=
\left[
\left(a^{-1}+2a^{-3}\right)
\left(1+\frac{M+1}{\lambda}\right)
\right]^{-1}
>0.
\]
This proves coercivity. Conversely, suppose that,
\[
Q_\gamma[h]
\ge
c\Norm{h}_{H^1(a,b)}^2
\qquad
\text{for every }h\in H_0^1((a,b);\R^n)
\]
for some $c>0$. For any $u\in H_0^1((\log a,\log b);\R^n)$, let
$h(t):=t^{-1}u(\log t)$. Then,
\[
\Norm{u}_{L^2(\log a,\log b)}^2
=
\int_a^b t\Norm{h(t)}^2\,dt
\le
b\Norm{h}_{H^1(a,b)}^2.
\]
Therefore,
\[
q_\gamma[u]
=
Q_\gamma[h]
\ge
\frac{c}{b}\Norm{u}_{L^2(\log a,\log b)}^2.
\]
Taking the infimum of the Rayleigh quotient over nonzero
$u\in H_0^1((\log a,\log b);\R^n)$ gives,
\[
\lambda_{\text{min}}(\mathcal H_\gamma)
\ge
\frac{c}{b}
>0.
\]
This proves the equivalence. Whenever these equivalent conditions hold, Lemma \ref{lem:Coercive-second-variation-local-minimality}
implies that $\gamma$ is a strict
$H^1$-local minimizer of the action.
\end{proof}
Another important aspect of Proposition \ref{prop:logaritmic-change-variable} is that it highlights $t^{2}\nabla^{2}f(\gamma(t))$ as a key quantity governing the local minimality or saddle behavior of the trajectory. In particular, it shows that both time and the Hessian play a role. For instance, the minimality properties of a trajectory at large times may differ from those near the initial time. Motivated by this observation, we introduce the following definition, which captures trajectories that, after a finite time, locally minimize the Nesterov functional on every subsequent finite interval.
\begin{deff}[Eventually locally minimizing]
\label{deff:eventual-local-min}
Let $X:[0,\infty)\to\R^n$ be a Nesterov trajectory. Fix \(T_0>0\). We say that $X$ is \emph{$T_0$-eventually locally minimizing} if for every finite $a$ and $b$ satisfying $T_0<a<b<\infty$, the restriction, $X|_{[a,b]}$, is a local minimizer of the action,
\[
\mathcal{L}_{a,b}(\gamma)
=
\int_a^b
t^3
\left(
\frac12\Norm{\dot{\gamma}(t)}^2
-
f(\gamma(t))
\right)\,dt,
\]
among curves in $H^{1}$-topology satisfying,
\[
\gamma(a)=X(a),
\qquad
\gamma(b)=X(b).
\]
We say that \(X\) is eventually locally minimizing if this holds for some \(T_0>0\). If every such restriction is a strict local minimizer, we say that $X$ is \emph{eventually strictly locally minimizing}.
\end{deff}
In light of Proposition \ref{prop:schrodinger}, we know that, on a finite interval, strict positivity of the Schr\"odinger operator is equivalent to strict positivity of the second variation. However, when studying eventual local minimality, we can actually show that more is true. In fact, nonnegativity of the Schr\"odinger operator on every finite interval after some time is sufficient to conclude eventual local minimality. The idea is very simple. If there is a nonzero eigenfunction corresponding to a zero eigenvalue on some interval, extend it by zero to a slightly larger interval that is still after that time. Nonnegativity on the larger interval implies that the extended function is a weak solution of the Schr\"odinger equation there. Since it vanishes on an open interval, uniqueness for the ODE implies that it must vanish everywhere, giving a contradiction. The following proposition makes this precise.
\begin{prop}
\label{prop:eventual-minimality-second-variation}
Let $f\in C^2(\mathbb R^n)$, let $X$ be a Nesterov
trajectory, and fix $T_0>0$. Then the following conditions are equivalent:
\begin{enumerate}
\item $X$ is $T_0$-eventually locally minimizing.
\item For every $T_0<a<b<\infty$,
\[
Q_{a,b}[h]\ge 0
\qquad
\text{for every }
h\in H_0^1((a,b);\mathbb R^n),
\]
\end{enumerate}
where $Q_{a,b}$ is the second-variation quadratic form on $[a,b]$. Moreover, whenever these conditions hold, the second
variation is coercive on every such finite interval. Consequently, every restriction $X|_{[a,b]}$ is a strict
$H^1$-local minimizer on $[a,b]$ of the Nesterov action.
\end{prop}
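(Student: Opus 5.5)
The plan is to prove (1)$\Rightarrow$(2) by the standard second-order necessary condition, and (2)$\Rightarrow$ coercivity on every interval $[a,b]$ with $T_0<a<b$ by the unique-continuation argument sketched before the statement. Once coercivity is in hand, Lemma \ref{lem:Coercive-second-variation-local-minimality} gives strict local minimality, and hence (1).

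\textbf{Step 1: (1)$\Rightarrow$(2).} Fix $T_0<a<b$ and suppose $X|_{[a,b]}$ is an $H^1$-local minimizer. For $h\in C_c^\infty((a,b);\R^n)$, the curve $X+\varepsilon h$ is an admissible competitor for small $|\varepsilon|$. Hence $\varepsilon\mapsto \mathcal L_{a,b}(X+\varepsilon h)$ has a local minimum at $0$ and is $C^2$ in $\varepsilon$, since $f\in C^2$ and $X$ is bounded on $[a,b]$. Therefore $Q_{a,b}[h]=\frac{d^2}{d\varepsilon^2}\mathcal L_{a,b}(X+\varepsilon h)|_{\varepsilon=0}\ge 0$. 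Since $Q_{a,b}$ is continuous in the $H^1$ norm (the weight $t^3$ and $\nabla^2 f(X(t))$ are bounded on $[a,b]$), density of $C_c^\infty$ in $H_0^1$ extends the inequality to all $h\in H_0^1((a,b);\R^n)$.

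\textbf{Step 2: (2)$\Rightarrow$ coercivity.} By Proposition \ref{prop:schrodinger}, it suffices to show $\lambda_{\min}(\mathcal H_X)>0$ on $[\log a,\log b]$. Condition (2) together with Proposition \ref{prop:logaritmic-change-variable} gives $\lambda_{\min}\ge0$. On a bounded interval with continuous matrix potential $V(s)=I-e^{2s}\nabla^2 f(X(e^s))$, the form $q$ is bounded below and the embedding $H_0^1\hookrightarrow L^2$ is compact. So the infimum of the Rayleigh quotient is attained by some minimizer $u\ne0$. Suppose for contradiction this infimum equals $0$. Choose $b'>b$; the interval $[a,b']$ still lies after $T_0$. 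Extend $u$ by zero to $\tilde u\in H_0^1((\log a,\log b'))$. Then $q_{a,b'}[\tilde u]=q_{a,b}[u]=0$, while $q_{a,b'}\ge0$ by (2). Thus $\tilde u$ minimizes the nonnegative form $q_{a,b'}$, so for every test function $\phi$ the function $\varepsilon\mapsto q_{a,b'}[\tilde u+\varepsilon\phi]\ge0$ vanishes at $0$. It follows that the bilinear form satisfies $q_{a,b'}(\tilde u,\phi)=0$, i.e. $\tilde u$ is a weak solution of $-\tilde u''+V\tilde u=0$ on $(\log a,\log b')$.

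Elliptic regularity in one dimension (the weak second derivative equals $V\tilde u$, which is continuous) upgrades $\tilde u$ to a $C^2$ classical solution of a linear ODE with continuous coefficients. Since $\tilde u\equiv0$ on $(\log b,\log b')$, both $\tilde u$ and $\tilde u'$ vanish at an interior point. Uniqueness for the linear initial value problem then forces $\tilde u\equiv0$, contradicting $u\neq0$. Hence $\lambda_{\min}>0$, and Proposition \ref{prop:schrodinger} gives coercivity of $Q_{a,b}$. Lemma \ref{lem:Coercive-second-variation-local-minimality} then shows that $X|_{[a,b]}$ is a strict $H^1$-local minimizer, which yields (1) as well as the ``moreover'' clause.

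\textbf{Main obstacle.} The delicate point is Step 2, specifically justifying that the minimal eigenvalue is attained and that a zero-energy minimizer of the nonnegative form on the larger interval is a genuine ODE solution. I would handle attainment via compactness of $H_0^1\hookrightarrow L^2$ and weak lower semicontinuity of $\int\|u_s\|^2$. For the second part, I would use the Euler--Lagrange argument above followed by bootstrapping regularity, taking care that the vector-valued setting changes nothing in this argument.
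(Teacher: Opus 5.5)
Your proposal is correct and follows essentially the same route as the paper. You use the second-order necessary condition for (1)$\Rightarrow$(2), and for the converse you zero-extend a putative zero-energy minimizer to a larger interval after $T_0$, derive the weak Euler--Lagrange equation from nonnegativity, upgrade to $C^2$, and invoke ODE uniqueness before concluding with Proposition~\ref{prop:schrodinger} and Lemma~\ref{lem:Coercive-second-variation-local-minimality}. The differences are cosmetic: you extend only to the right where the paper extends on both sides, and you spell out the compactness argument for attainment of the minimum and the density step from $C_c^\infty$ to $H_0^1$, both of which the paper leaves implicit.
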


\begin{proof}
Suppose first that $X$ is $T_0$-eventually locally
minimizing. Fix $T_0<a<b<\infty$. For every smooth
fixed-endpoint variation $h$, the second-order necessary
condition for local minimality gives, $Q_{a,b}[h]\ge 0$.  Conversely, suppose that the second condition holds.
Fix $T_0<a<b<\infty$, and set,
\[
\alpha=\log a,
\qquad
\beta=\log b,
\qquad
V(s)=I-e^{2s}\nabla^2 f(X(e^s)).
\]
Under the change of variables $u(s)=e^s h(e^s)$, Proposition~4.1 gives,
\[
Q_{a,b}[h]
=
q_{\alpha,\beta}[u]
:=
\int_\alpha^\beta
\left(
\|u_s\|^2+\langle V(s)u,u\rangle
\right)\,ds.
\]
Thus the Dirichlet Schr\"odinger operator,
\[
\mathcal{H}_{\alpha,\beta}
=
-\frac{d^2}{ds^2}+V(s),
\]
has nonnegative lowest eigenvalue. We claim that its
lowest eigenvalue is in fact strictly positive. Suppose otherwise. There is then a nonzero eigenfunction, $u\in
H^2((\alpha,\beta);\mathbb R^n)
\cap H_0^1((\alpha,\beta);\mathbb R^n)$ such that,
\[
\mathcal{H}_{\alpha,\beta}u=0,
\qquad
q_{\alpha,\beta}[u]=0.
\]
Choose a slightly larger interval satisfying,
\[
\log T_0<\alpha'<\alpha<\beta<\beta'<\infty,
\]
and extend $u$ by zero to $(\alpha',\beta')$.
Denote this extension by $\widetilde u$. Since $u$
vanishes at both endpoints,
\[
\widetilde u\in
H_0^1((\alpha',\beta');\mathbb R^n),
\qquad
q_{\alpha',\beta'}[\widetilde u]=0.
\]

By assumption, $q_{\alpha',\beta'}$ is nonnegative.
For any
$v\in H_0^1((\alpha',\beta');\mathbb R^n)$
and any $\varepsilon\in\mathbb R$, we therefore have,
\[
0\le
q_{\alpha',\beta'}[\widetilde u+\varepsilon v]
=
2\varepsilon B(\widetilde u,v)
+
\varepsilon^2q_{\alpha',\beta'}[v],
\]
where,
\[
B(w,v)
:=
\int_{\alpha'}^{\beta'}
\left(
\langle w_s,v_s\rangle
+
\langle V(s)w,v\rangle
\right)\,ds.
\]
Considering both signs of sufficiently small
$\varepsilon$ yields,
\[
B(\widetilde u,v)=0
\qquad
\text{for every }v\in H_0^1((\alpha',\beta');\mathbb R^n).
\]
Hence $\widetilde u$ is a weak solution of,
\[
-\widetilde u_{ss}+V(s)\widetilde u=0
\qquad
\text{on }(\alpha',\beta').
\]
Because $V$ is continuous, this weak solution is $C^2$.
But $\widetilde u$ vanishes on the open interval
$(\alpha',\alpha)$. Uniqueness of the ODE therefore implies
$\widetilde u\equiv0$, contradicting the choice of $u$. We conclude that,
\[
\lambda_{\min}(H_{\alpha,\beta})>0.
\]
Proposition~\ref{prop:schrodinger} now gives coercivity of $Q_{a,b}$.
Since $X$ is a critical point of the action,
Lemma~\ref{lem:Coercive-second-variation-local-minimality} implies that $X|_{[a,b]}$ is a strict
$H^1$-local minimizer on $[a,b]$. The interval $[a,b]$ was arbitrary,
which completes the proof.
\end{proof}
The next corollary provides a sufficient condition for $X$ to be eventually  locally minimizing.
\begin{cor}
\label{cor:time-square-hessian}
Let $X:[0,\infty)\to\R^n$ be a regular Nesterov trajectory. Suppose that,
\[
\limsup_{t\to\infty}
\lambda_{\max}
\left(
t^2\nabla^2 f(X(t))
\right)
<1.
\]
Then $X$ is eventually strictly locally minimizing.

\end{cor}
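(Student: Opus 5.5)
Since the $\limsup$ of $\lambda_{\max}(t^2\nabla^2 f(X(t)))$ is strictly less than $1$, I will first pick $\delta>0$ and $T_0>0$ with
\[
\lambda_{\max}\left(t^2\nabla^2 f(X(t))\right)\le 1-\delta \qquad\text{for all } t> T_0 .
\]
In logarithmic time this says that the potential $V(s)=I-e^{2s}\nabla^2 f(X(e^s))$ of the operator $\mathcal H_\gamma$ from Proposition~\ref{prop:schrodinger} satisfies $V(s)\succeq \delta I$ for every $s>\log T_0$. Here $V(s)$ is symmetric because $f\in C^2$, so its Hessian is symmetric.

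Next, fix arbitrary $T_0<a<b<\infty$ and let $h\in H_0^1((a,b);\R^n)$ with $u(s)=e^s h(e^s)$. Proposition~\ref{prop:logaritmic-change-variable} gives
\[
Q_{a,b}[h]=\int_{\log a}^{\log b}\left(\Norm{\dot u}^2+\langle V(s)u,u\rangle\right)ds \ \ge\ \delta\Norm{u}_{L^2(\log a,\log b)}^2 .
\]
This holds for every admissible $u$, so the Rayleigh quotient of the Dirichlet operator $\mathcal H_\gamma$ on $[\log a,\log b]$ is bounded below by $\delta$. Hence $\lambda_{\min}(\mathcal H_\gamma)\ge\delta>0$.

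From here I can finish in either of two ways.
\begin{enumerate}
\item Proposition~\ref{prop:schrodinger} turns $\lambda_{\min}(\mathcal H_\gamma)>0$ into coercivity of the second variation on $H_0^1((a,b);\R^n)$. Since $X$ solves the Nesterov ODE, it is a critical point of $\mathcal L_{a,b}$, so Lemma~\ref{lem:Coercive-second-variation-local-minimality} makes $X|_{[a,b]}$ a strict $H^1$-local minimizer.
\item Alternatively, the bound already shows $Q_{a,b}\ge 0$ on every such interval, and Proposition~\ref{prop:eventual-minimality-second-variation} yields strict local minimality directly.
\end{enumerate}
Because $a<b$ was arbitrary beyond $T_0$, $X$ is $T_0$-eventually strictly locally minimizing.

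The argument is essentially routine. The only point needing care is checking the hypotheses of the earlier results: continuity of $V$ on each compact interval (which follows from $f\in C^2$ and $X\in C^2$ on $(0,\infty)$), and the strictness of the $\limsup$ bound, which supplies the uniform margin $\delta$. No decay of $t^2\nabla^2 f$ is needed, only that it stays uniformly below $1$.
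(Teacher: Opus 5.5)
Your proposal is correct and follows the paper's proof essentially step for step. You extract a uniform margin $\delta$ from the $\limsup$, obtain $I-e^{2s}\nabla^2 f(X(e^s))\succeq\delta I$ on $[\log a,\log b]$, and use Proposition~\ref{prop:logaritmic-change-variable} to bound the lowest Dirichlet eigenvalue of the Schr\"odinger operator below by $\delta$; you also spell out the final step the paper leaves implicit, passing from a positive eigenvalue to strict local minimality via Proposition~\ref{prop:schrodinger} and Lemma~\ref{lem:Coercive-second-variation-local-minimality}, or via Proposition~\ref{prop:eventual-minimality-second-variation}.
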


\begin{proof}
By assumption, there exist $\delta>0$ and $T_0>0$ such that,
\[
\lambda_{\max}
\left(
t^2\nabla^2 f(X(t))
\right)
\leq
1-\delta,
\]
for every $t\geq T_0$. Equivalently,
\[
t^2\nabla^2 f(X(t))
\preceq
(1-\delta)I,
\qquad t\geq T_0.
\]

Fix any finite interval $T_0<a<b<\infty$. Using Proposition \ref{prop:logaritmic-change-variable} the second variation is,
\[
Q_X[h]
=
\int_{\log a}^{\log b}
\left[
\Norm{\dot u(s)}^2
+
\left\langle
\left(
I-e^{2s}\nabla^2 f(X(e^s))
\right)u(s),u(s)
\right\rangle
\right]ds.
\]
Since $e^s\geq T_0$ on this interval,
\[
I-e^{2s}\nabla^2 f(X(e^s))
\succeq
\delta I.
\]
Therefore,
\[
Q_X[h]
\geq
\int_{\log a}^{\log b}
\left(
\Norm{\dot u(s)}^2
+
\delta\Norm{u(s)}^2
\right)ds.
\]
In particular, the associated Schr\"odinger operator has strictly
positive lowest Dirichlet eigenvalue on every such interval. 
\end{proof}

\section{Eventual Local Minimality for Power Objectives}
\label{sec:eventual-power}
In this section, we prove our main result concerning eventual local minimality of the Nesterov trajectory for functions of the form  $f(x) = \frac{1}{p} \|x\|^p$, where $\|\cdot \|$ denotes the Euclidean norm and $p \ge 2$. 
The minimizer of this function is $x^*=0$. The self-contained proof of the following result relies entirely on ODE techniques and is inspired by earlier work on radial solutions of semilinear elliptic equations and their oscillatory behavior (see \cite{kawano1990generalized,kusano1987oscillation}). 
We discuss the connection with this class of PDEs in the next section.
\begin{thm}
\label{thm:power-characterization}
Let
$f(x)=\frac{1}{p}\Norm{x}^{p}$ for some $p\ge2$, 
and let $X:[0,\infty)\to\R^n$ be the regular Nesterov trajectory,
\[
\ddot X(t)+\frac{3}{t}\dot X(t)
+\Norm{X(t)}^{p-2}X(t)=0,
\qquad
X(0)=x_0\neq0,
\qquad
\dot X(0)=0.
\]
Then $X$ is eventually locally minimizing if and only if $p=4$.
\end{thm}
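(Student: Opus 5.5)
The plan is to reduce everything to a scalar problem and then use the Emden--Fowler change of variables, which turns $p=4$ into a conservative, autonomous equation. By uniqueness in Lemma~\ref{lem:reg-nesterov-sol} and rotational invariance of $f$, the trajectory stays on the line through $x_0$: $X(t)=\varphi(t)e$ with $e=x_0/\Norm{x_0}$, where $\varphi''+\frac3t\varphi'+|\varphi|^{p-2}\varphi=0$, $\varphi(0)=\Norm{x_0}$ and $\varphi'(0)=0$. The Hessian $\nabla^2 f(x)=\Norm{x}^{p-2}\bigl(I+(p-2)\frac{xx^\top}{\Norm{x}^2}\bigr)$ is diagonal in the splitting $\R e\oplus e^\perp$. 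Hence the potential in Proposition~\ref{prop:schrodinger} splits into the scalar potentials $1-(p-1)t^2|\varphi|^{p-2}$ (radial) and $1-t^2|\varphi|^{p-2}$ (tangential), with $t=e^s$.

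For $p>2$, set $\alpha=2/(p-2)$ and $w(s)=t^{\alpha}\varphi(t)$. Using $t^2(\varphi''+\frac3t\varphi')=(D^2+2D)\varphi$ with $D=d/ds$, and $\alpha(p-1)=\alpha+2$, a direct computation gives
\[
w''+(2-2\alpha)w'+\alpha(\alpha-2)w+|w|^{p-2}w=0,
\qquad
t^2|\varphi(t)|^{p-2}=|w(s)|^{p-2}.
\]
The damping coefficient vanishes exactly when $p=4$. The regular initial data give $w,w'\to0$ as $s\to-\infty$, so $w$ starts at energy $0$ for
\[
E(s)=\tfrac12 w'^2+\tfrac12\alpha(\alpha-2)w^2+\tfrac1p|w|^p,
\qquad
E'=(2\alpha-2)w'^2 .
\]

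\emph{Case $p=4$.} Here $\alpha=1$, $E\equiv0$, and $w=t\varphi$ lies on the homoclinic orbit of $w''-w+w^3=0$, namely $w(s)=\pm\sqrt2\,\mathrm{sech}(s-s_0)$. Thus $t^2\nabla^2 f(X(t))$ has eigenvalues $3w^2$ and $w^2$, both tending to $0$. Corollary~\ref{cor:time-square-hessian} then gives eventual (strict) local minimality.

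\emph{Case $p>4$.} Here $\alpha<1$, so there is positive damping and $E$ is strictly decreasing; hence $E(s)<0$ for all $s$. The potential is a double well, and a damped motion with negative energy must converge to one of the wells, $w\to\pm w_*$ with $w_*^{p-2}=\alpha(2-\alpha)=4(p-3)/(p-2)^2$. Then the radial potential converges to
\[
1-(p-1)w_*^{p-2}=1-4+\frac{4}{(p-2)^2}<0 .
\]
On $[\log a,\log a+L]$ with $a$ large, the test function $u(s)=\sin(\pi(s-\log a)/L)\,e$ gives $q[u]\approx(\pi^2/L^2-c)\,L/2<0$ for $L$ large. By Proposition~\ref{prop:eventual-minimality-second-variation}, $X$ is not eventually locally minimizing.

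\emph{Case $2\le p<4$.} For $p=2$ the claim is immediate: $t^2\nabla^2 f=t^2 I$, and the sine test function on any interval of length $\ge1$ beyond $T_0$ makes $q$ negative. For $2<p<4$ I use a Jacobi field coming from the scaling symmetry. Since $\varphi_\lambda(t)=\lambda^{\alpha}\varphi(\lambda t)$ solves the same ODE, the function $z=\partial_\lambda\varphi_\lambda|_{\lambda=1}=\alpha\varphi+t\varphi'$ solves the radial linearized equation, and $w'=t^{\alpha}z$. Hence $u=tz=t^{1-\alpha}w'$ (times $e$) solves $\mathcal H_X u=0$. Suppose $w'$ has two zeros $s_1<s_2$ with $e^{s_1}>T_0$. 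Then $u|_{[s_1,s_2]}$ is a nonzero Dirichlet zero mode. The extension-by-zero argument in the proof of Proposition~\ref{prop:eventual-minimality-second-variation} shows that nonnegativity on a slightly larger interval would force this extension to solve the ODE, which is impossible. So it suffices to show that $w'$ has zeros accumulating at $+\infty$, i.e.\ that $w$ is not eventually monotone.

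Here $\alpha>1$, so $E$ is nondecreasing and is positive after some $s_1$. Suppose $w$ were eventually monotone. If $w$ stays bounded, it converges; then $w'\to0$ along a sequence, and one shows $w$ tends to an equilibrium. An equilibrium at $0$ contradicts $E\ge E(s_1)>0$. A nonzero equilibrium $\pm w_*$ exists only when $\alpha<2$, and it has negative energy, which again contradicts $E>0$. If instead $|w|\to\infty$ monotonically, the coercive term $-|w|^{p-2}w$ in $w''$ eventually dominates the anti-damping term. I would show this by comparison in the original variable $\varphi$, whose energy $\frac12\varphi'^2+\frac1p|\varphi|^p$ is decreasing, so $|\varphi|$ stays bounded. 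This step, proving oscillation of $w$ for $2<p<4$, is the main obstacle. If the direct argument is too delicate, I would fall back on the oscillation theory for $\varphi''+\frac3t\varphi'+|\varphi|^{p-2}\varphi=0$ of \cite{kusano1987oscillation,kawano1990generalized}, which gives infinitely many zeros of $z$ (equivalently of $w'$).
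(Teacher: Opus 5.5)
\textbf{Verdict: the cases $p=2$, $p=4$, $p>4$ are sound, but the case $2<p<4$ has a genuine gap at the step you yourself flag.}

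\textbf{Cases $p=2$, $p=4$, $p>4$.} These follow the paper. Your homoclinic orbit $w=\sqrt2\,\mathrm{sech}(s-s_0)$ with $e^{s_0}=\sqrt8/\Norm{x_0}$ is exactly the paper's explicit solution. One small fix for $p=2$: the sine test function is negative only on intervals lying beyond $\max(T_0,\sqrt{1+\pi^2})$, not on ``any'' interval beyond $T_0$.

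\textbf{Your route for $2<p<4$.} The Jacobi-field reduction is valid and differs from the paper's argument. The paper takes $h=X$ on a nodal interval $[a,b]$ of $r$ and uses $\int_a^b t^3\dot r^2\,dt=\int_a^b t^3|r|^p\,dt$ to get $Q[h]=(2-p)\int_a^b t^3|r|^p\,dt<0$. Your conjugate-point argument needs only that $w$ is not eventually monotone, which is weaker than oscillation of $r$. Your energy argument with $E$ also correctly rules out an eventually monotone, bounded $w$.

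\textbf{The gap.} The missing case is $w$ eventually monotone with $|w|\to\infty$. Your sketch does not handle it. Decay of $\frac12\varphi'^2+\frac1p|\varphi|^p$ gives only that $|\varphi|$ is bounded. But $|w|=t^{\alpha}|\varphi|$ with $\alpha>1$, so you need the rate $|\varphi(t)|=O(t^{-\alpha})$ for every eventually one-signed solution.

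This rate is precisely the paper's key estimate. If $r>0$ on $[T,\infty)$, then $F=t^3\dot r$ is decreasing and eventually negative, $r\downarrow 0$, and
\[
\tfrac{15}{64}t^4r^{p-1}\le -F(t)\le \tfrac83 t^2 r .
\]
Hence $t^2r^{p-2}\le C$, i.e.\ $w$ is bounded. The paper then finishes with the Pohozaev quantity: $P'=(\frac4p-1)t^3|r|^p>0$, $P(0)=0$, and $P=O(t^{2-2k})\to0$. Your $E$-argument would serve equally well for that last step.

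You can also close the gap directly in your own variable. Set $c:=2\alpha-2>0$; then
\[
\bigl(e^{-cs}w'\bigr)'=e^{-cs}\bigl(\alpha(2-\alpha)w-|w|^{p-2}w\bigr).
\]
\begin{enumerate}
\item If $w\uparrow+\infty$, the right side is eventually $\le -\frac12 e^{-cs}w^{p-1}$.
\item Since $e^{-cs}w'\ge 0$, integrate from $s$ to $\infty$ and use the monotonicity of $w$ to get $w'(s)\ge \frac{1}{2c}\,w(s)^{p-1}$.
\item Because $p-1>1$, this forces blow-up at finite $s$, contradicting global existence.
\end{enumerate}

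Without one of these two insertions, the $2<p<4$ case rests only on your citation fallback.
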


\begin{proof}
Since the objective is radial and $\dot X(0)=0$, uniqueness of the
Nesterov flow implies that the trajectory remains on the line spanned by
$x_0$. Thus, writing,
\[
\mathbf{e}=\frac{x_0}{\Norm{x_0}},
\qquad
X(t)=r(t)\mathbf{e},
\]
the scalar function $r$ satisfies,
\begin{equation} \label{eq:se1}
\ddot r(t)+\frac{3}{t}\dot r(t)
+|r(t)|^{p-2}r(t)=0,
\qquad
r(0)=\Norm{x_0},
\qquad
\dot r(0)=0.
\end{equation}
Moreover, we have,
\[
\nabla^2 f(X(t))
=
|r(t)|^{p-2}
\left(
I+(p-2)\mathbf{e}\mathbf{e}^\top
\right).
\]
\paragraph{Case $p=2$.}
In this case,
\[
f(x)=\frac12\Norm{x}^2,
\qquad
\nabla^2f(x)=I.
\]
Applying the change of variables \eqref{eq:cvar}, the second variation takes the form,
\[
Q_X[h]
=
\int_{\log a}^{\log b}
\left[
\Norm{\dot u(s)}^2
+
\left(1-e^{2s}\right)\Norm{u(s)}^2
\right]\,ds.
\]
Hence the second variation is independent of the particular Nesterov
trajectory $X(\cdot)$. 
Fix any $\ell>0$ and any unit vector $v\in\R^n$. For $S>0$, consider
the interval $[S,S+\ell]$ in logarithmic time and define,
\[
u(s)
=
\sin\left(
\frac{\pi(s-S)}{\ell}
\right)v,
\qquad
s\in[S,S+\ell].
\]
Then $u(S)=u(S+\ell)=0$, and,
\[
\int_S^{S+\ell}\Norm{\dot u(s)}^2\,ds
=
\frac{\pi^2}{\ell^2}
\int_S^{S+\ell}\Norm{u(s)}^2\,ds.
\]
Moreover, since $s\geq S$, $1-e^{2s}
\leq
1-e^{2S}$. It follows that,
\[
\begin{aligned}
Q_X[h]
&\leq
\left(
\frac{\pi^2}{\ell^2}
+
1-e^{2S}
\right)
\int_S^{S+\ell}\Norm{u(s)}^2\,ds.
\end{aligned}
\]
For all sufficiently large $S$, $e^{2S}
>
1+\frac{\pi^2}{\ell^2}$, and hence, $Q_X[h]<0$. The corresponding variation in the original time variable is,
\[
h(t)
=
\frac1t
\sin\left(
\frac{\pi(\log t-S)}{\ell}
\right)v,
\qquad
t\in[e^S,e^{S+\ell}],
\]
and it vanishes at both endpoints. 
Taking $S$ arbitrarily large shows that negative second-variation directions exist on finite intervals after any given time. 
Therefore, the Nesterov trajectory is not eventually locally minimizing when $p=2$.

\paragraph{Case $2<p<4$.}
In this case, it suffices to show that $r(t)$ has a sequence of zeros tending to infinity, meaning that the trajectory oscillates around the minimizer. Once this is established, the remainder of the proof is straightforward as we explain now. Let $a<b$ be two consecutive zeros of $r$. The variation,
\[
h(t)=X(t)=r(t)\mathbf{e},
\]
satisfies,
\[
h(a)=h(b)=0.
\]
Writing the scalar equation \eqref{eq:se1} in divergence form,
\begin{equation} \label{eq:pd1}
\bigl(t^3\dot r(t)\bigr)'
+
t^3|r(t)|^{p-2}r(t)
=
0,
\end{equation}
multiplying by $r(t)$, and integrating over $[a,b]$, we obtain,
\[
\int_a^b
r(t)\bigl(t^3\dot r(t)\bigr)'\,dt
+
\int_a^b
t^3|r(t)|^p\,dt
=
0.
\]
Integration by parts gives,
\[
\left[t^3r(t)\dot r(t)\right]_a^b
-
\int_a^b t^3|\dot r(t)|^2\,dt
+
\int_a^b t^3|r(t)|^p\,dt
=
0.
\]
Since $r(a)=r(b)=0$, the boundary term vanishes, and hence,
\[
\int_a^b t^3|\dot r(t)|^2\,dt
=
\int_a^b t^3|r(t)|^p\,dt.
\]

For $f(x)=\frac1p\Norm{x}^p$, we have,
\[
\nabla^2f(r \mathbf{e})
=
|r|^{p-2}
\left(
I+(p-2)\mathbf{e}\mathbf{e}^\top
\right).
\]
Since $h=r \mathbf{e}$, we have,
\[
\left\langle
\nabla^2f(X(t))h(t),h(t)
\right\rangle
=
(p-1)|r(t)|^p.
\]
Therefore,
\[
Q_X[h]
=
\int_a^b
t^3
\left(
|\dot r(t)|^2
-
(p-1)|r(t)|^p
\right)\,dt 
=
(2-p)
\int_a^b
t^3|r(t)|^p\,dt 
<0.
\]
Since nodal intervals occur arbitrarily far out in time, negative
second-variation directions occur on arbitrarily late intervals.
Therefore, $X$ is not eventually locally minimizing. It remains to show that $r(t)$ has a sequence of unbounded zeros. Define the Pohozaev quantity, associated
with the classical Pohozaev identity \citep{pohozaev1965eigenfunctions}, 
\begin{equation} \label{eq:pd5}
P(t)
=
t^4
\left(
\frac12|\dot r(t)|^2+\frac1p|r(t)|^p
\right)
+
t^3r(t)\dot r(t).
\end{equation}
Differentiating $P$, and using,
\[
\frac{d}{dt}|r(t)|^p
=
p|r(t)|^{p-2}r(t)\dot r(t),
\]
we obtain,
\[
\begin{aligned}
P'(t)
&=
4t^3
\left(
\frac12|\dot r(t)|^2+\frac1p|r(t)|^p
\right)
+t^4
\left(
\dot r(t)\ddot r(t)
+
|r(t)|^{p-2}r(t)\dot r(t)
\right) \\
&\quad
+3t^2r(t)\dot r(t)
+t^3|\dot r(t)|^2
+t^3r(t)\ddot r(t).
\end{aligned}
\]
The equation,
\[
\ddot r(t)+\frac3t\dot r(t)+|r(t)|^{p-2}r(t)=0,
\]
implies,
\[
t^4\dot r(t)
\left(
\ddot r(t)+|r(t)|^{p-2}r(t)
\right)
=
-3t^3|\dot r(t)|^2.
\]
It also gives,
\[
t^3r(t)\ddot r(t)
=
t^3r(t)
\left(
-\frac3t\dot r(t)
-
|r(t)|^{p-2}r(t)
\right)
=
-3t^2r(t)\dot r(t)
-
t^3|r(t)|^p.
\]
By substituting these identities into the expression for $P'(t)$, we obtain,
\[
\begin{aligned}
P'(t)
&=
4t^3
\left(
\frac12|\dot r(t)|^2+\frac1p|r(t)|^p
\right)
-3t^3|\dot r(t)|^2 \\
&\quad
+3t^2r(t)\dot r(t)
+t^3|\dot r(t)|^2
-3t^2r(t)\dot r(t)
-t^3|r(t)|^p.
\end{aligned}
\]
The terms involving $r(t)\dot r(t)$ cancel, as do the terms involving
$|\dot r(t)|^2$, since $4\left(\frac12\right)-3+1=0$. Therefore, we have,
\[
P'(t)
=
\left(\frac4p-1\right)t^3|r(t)|^p.
\]
Since $2<p<4$, the term in parentheses is positive,  so $P$ is nondecreasing. Moreover, since $P(0)=0$ and the
solution is nontrivial,
\[
P(t)>0
\qquad
\text{for every }t>0.
\]

For contradiction, suppose that $r$ has finitely many zeros.  
Since the zeros of a nontrivial solution are isolated, $r$ has a fixed sign
for all sufficiently large $t$. After possibly replacing $r$ by $-r$,
we may assume that there exists $T>0$ such that,
\begin{equation} \label{eq:pd2}
r(t)>0
\qquad
\text{for every }t\geq T.
\end{equation}
Set $F(t):=t^3\dot r(t)$.  
Then from \eqref{eq:pd1} together with  \eqref{eq:pd2}, we have
\[
F'(t) =
(t^3 \dot r(t))'
=
-t^3|r(t)|^{p-2}r(t) = -t^3 r(t)^{p-1} <0,
\qquad t\geq T,
\]
implying that  $F$ is strictly decreasing on $[T,\infty)$.

We first show that $F(t)<0$ for all sufficiently large $t$.
If such  were not the case, since $F$ is decreasing, we would have $F(t)\geq0$ for every sufficiently
large $t$. It would follow  from the definition of $F$ that $\dot r(t)\geq0$, and hence,
\[
r(t)\geq r(T)>0.
\]
Consequently,
\[
F'(t)
=
-t^3r(t)^{p-1}
\leq
-r(T)^{p-1}t^3,
\]
which forces $F(t)\to-\infty$, a contradiction. 
We conclude that, after increasing $T$ if necessary,
\[
F(t)<0
\qquad
\text{for every }t\geq T.
\]
It follows that,
\[
\dot r(t)=\frac{F(t)}{t^3}<0,
\]
so $r$ is decreasing on $[T,\infty)$. Since $r$ is positive and
decreasing, it has a limit,
\[
\ell:=\lim_{t\to\infty}r(t)\geq0.
\]
We must have $\ell=0$. Indeed, if $\ell>0$, then for all sufficiently
large $t$, we would have 
\begin{align*}
F'(t)
=
-t^3r(t)^{p-1}
\leq
-\frac{\ell^{p-1}}{2}t^3.
\end{align*}
 Let $T>0$ be such that the above inequality holds for
every $t\ge T$, and set $c_0:=\ell^{p-1}/2>0$.
Integrating from $T$ to $t$ gives,
\[
\begin{aligned}
F(t)
&=F(T)+\int_T^t F'(s)\,ds\\
&\le F(T)-c_0\int_T^t s^3\,ds\\
&=F(T)-\frac{c_0}{4}(t^4-T^4)\\
&=-\frac{c_0}{4}t^4+C,
\end{aligned}
\]
where $C:=F(T)+c_0T^4/4$ is a fixed constant.
For all sufficiently large $t$, we have $C\le c_0t^4/8$,
and therefore,
\[
F(t)\le -\frac{c_0}{8}t^4=-ct^4,
\qquad c:=\frac{c_0}{8}>0.
\]
Hence,
\[
\dot r(t)=\frac{F(t)}{t^3}\leq-ct.
\]
This would force $r$ to become negative in finite time, contradicting
$r(t)>0$. Thus $r(t) \to 0$. We now derive quantitative decay estimates. Since $F<0$ and is
decreasing, the function $-F$ is positive and increasing. For
$t\geq2T$, integrating $F'$ over $[t/2,t]$ gives,
\[
-F(t)
=
-F(t/2)
+
\int_{t/2}^{t}s^3r(s)^{p-1}\,ds.
\]
Since $-F(t/2)\geq0$ and $r$ is decreasing,
\[
r(s)\geq r(t),
\qquad
s\in[t/2,t].
\]
It follows that,
\begin{equation} \label{eq:pd3}
-F(t)
\geq
r(t)^{p-1}\int_{t/2}^{t}s^3\,ds 
=
\frac{15}{64}t^4r(t)^{p-1}.
\end{equation}
For the reverse estimate, use,
\[
-\dot r(s)=\frac{-F(s)}{s^3}.
\]
Since $-F$ is increasing, for $s\in[t,2t]$ we have,
\[
-F(s)\geq-F(t).
\]
Hence,
\[
\begin{aligned}
r(t)-r(2t)
&=
\int_t^{2t}-\dot r(s)\,ds \\
&=
\int_t^{2t}\frac{-F(s)}{s^3}\,ds \\
&\geq
-F(t)\int_t^{2t}s^{-3}\,ds \\
&=
\frac{3}{8t^2}\bigl(-F(t)\bigr).
\end{aligned}
\]
Since $r(2t)\geq0$, we also have,
\[
r(t)-r(2t)\leq r(t).
\]
It follows that
\begin{equation} \label{eq:pd4}
-F(t)
\leq
\frac83t^2r(t).
\end{equation}

Combining the estimates \eqref{eq:pd3} and \eqref{eq:pd4}, we obtain
\[
\frac{15}{64}t^4r(t)^{p-1}
\leq
-F(t)
\leq
\frac83t^2r(t).
\]
Since $r(t)>0$, dividing by $t^2r(t)$ gives,
\[
r(t)^{p-2}
\leq
C t^{-2}
\]
for some constant $C>0$. Therefore,
\[
r(t)
=
O\left(t^{-k}\right),
\qquad
k:=\frac{2}{p-2}.
\]
Since $2<p<4$, we have $k>1$. Moreover, from the definition of $F$ and \eqref{eq:pd4}, we have,
\[
|\dot r(t)|
=
\frac{-F(t)}{t^3}
\leq
C\frac{r(t)}{t},
\]
for some $C>0$, and hence $\dot r(t)
=
O\left(t^{-k-1}\right)$. We now substitute these estimates into the Pohozaev quantity. 
We have,
\[
t^4|\dot r(t)|^2
=
O\left(t^{2-2k}\right), \quad
t^3|r(t)\dot r(t)|
=
O\left(t^{2-2k}\right), \quad
t^4|r(t)|^p
=
O\left(t^{4-kp}\right).
\]
Since $k=\frac{2}{p-2}$ we have,
\[
4-kp=2-2k.
\]
Consequently, by substituting into \eqref{eq:pd5}, we obtain,
\[
|P(t)|
=
O\left(t^{2-2k}\right).
\]
Since $k>1$, the exponent on the right-hand side is negative, so,
\[
P(t)\to0.
\]
This contradicts the fact that $P(t)>0$ and $P$ is nondecreasing.
Hence $r$ must have infinitely many zeros. Note that the zeros of $r$ cannot have a finite accumulation point. If
$t_j\to t_\ast>0$ and $r(t_j)=0$, then continuity gives
$r(t_\ast)=0$, and by Rolle's theorem there is  a sequence $s_j\to t_\ast$ with $\dot r(s_j)=0$. Thus $\dot r(t_\ast)=0$, and uniqueness for the ODE
would imply that $r\equiv0$, contradicting $r(0)=\Norm{x_0}>0$.
Therefore, the set of zeros is unbounded.

\paragraph{Case $p=4$.}
In this case the regular solution is explicit,
\[
X(t)
=
\frac{x_0}
{1+\Norm{x_0}^2t^2/8}.
\]
Therefore $\Norm{X(t)}=O(t^{-2})$. Since,
\[
\nabla^2 f(X(t))
=
\Norm{X(t)}^2
\left(
I+2\mathbf{e}\mathbf{e}^\top
\right),
\]
we obtain, $t^2\nabla^2f(X(t)) \to 0$. Hence there exists $T_0>0$ and $\delta>0$ such that,
\[
t^2\nabla^2 f(X(t))
\preceq
(1-\delta)I,
\qquad
t\ge T_0.
\]
By the logarithmic-time representation of the second variation, every
finite interval,
\[
T_0<a<b<\infty,
\]
has strictly positive second variation for every nonzero fixed-endpoint
variation. Consequently, every such restriction $X|_{[a,b]}$ is a
strict local minimizer on $[a,b]$.

\paragraph{Case $p>4$.}
For $p>4$, set,
\[
k=\frac{2}{p-2}\in(0,1),
\qquad
s=\log t,
\qquad
z(s)=t^k r(t).
\]
Equivalently, $r(t)=t^{-k}z(\log t)$. Next,
the chain rule gives,
\[
\dot r(t)
=
t^{-k-1}\bigl(z_s-kz\bigr),
\]
and,
\[
\ddot r(t)
=
t^{-k-2}
\left(
z_{ss}-(2k+1)z_s+k(k+1)z
\right).
\]
Consequently,
\[
\ddot r(t)+\frac{3}{t}\dot r(t)
=
t^{-k-2}
\left(
z_{ss}+2(1-k)z_s-k(2-k)z
\right).
\]
Moreover, the identity $k(p-2)=2$ implies,
\[
|r(t)|^{p-2}r(t)
=
t^{-k(p-1)}|z|^{p-2}z
=
t^{-k-2}|z|^{p-2}z.
\]
Substituting these expressions into the equation,
\[
\ddot r+\frac{3}{t}\dot r+|r|^{p-2}r=0,
\]
and multiplying by $t^{k+2}$ yields,
\[
z_{ss}
+
2(1-k)z_s
-
k(2-k)z
+
|z|^{p-2}z
=
0.
\]
The associated energy $\mathcal E_z$ is defined by,
\[
\mathcal E_z(s)
=
\frac12|z_s|^2
-
\frac{k(2-k)}{2}|z|^2
+
\frac1p|z|^p.
\]
Differentiating with respect to $s$ gives,
\[
\begin{aligned}
\mathcal E_z'(s)
&=
z_s z_{ss}
-
k(2-k)z z_s
+
|z|^{p-2}z z_s\\
&=
z_s\left(
z_{ss}
-
k(2-k)z
+
|z|^{p-2}z
\right).
\end{aligned}
\]
The equation for $z$ implies,
\[
z_{ss}-k(2-k)z+|z|^{p-2}z
=
-2(1-k)z_s.
\]
Therefore,
\[
\mathcal E_z'(s)
=
-2(1-k)|z_s|^2
\le0,
\]
where the inequality follows from $0<k<1$. Write $r_0:=r(0)\ne0$. The scalar equation and regularity
at $t=0$ give,
\[
\dot r(t)
=
-\frac{1}{t^3}
\int_0^t \tau^3|r(\tau)|^{p-2}r(\tau)\,d\tau
=
O(t).
\]
Consequently,
\[
r(t)=r_0+O(t^2)
\qquad\text{as }t\to0.
\]
Since $z(s)=e^{ks}r(e^s)$, it follows that,
\[
z(s)
=
r_0e^{ks}+O(e^{(k+2)s}),
\]
and,
\[
z_s(s) =
e^{ks}\left(kr(e^s)+e^s\dot r(e^s)\right) =
kr_0e^{ks}+O(e^{(k+2)s}) \qquad\text{as }s\to-\infty.
\]
In particular, $z(s),z_s(s)\to0$, and hence, $\lim_{s\to-\infty}\mathcal E_z(s)=0$. Integrating $\mathcal E_z'(s)=-2(1-k)|z_s(s)|^2$ from $-\infty$ to $s$ therefore gives,
\[
\mathcal E_z(s)
=
-2(1-k)\int_{-\infty}^s|z_s(\sigma)|^2\,d\sigma
<0.
\]
The integral converges by the above asymptotics. The
inequality is strict because $r_0\ne0$ implies that $z_s$ is
nonzero for all sufficiently negative $s$. Set $\alpha:=k(2-k)>0$. Since,
\[
\mathcal E_z(s)
=
\frac12|z_s(s)|^2
-
\frac{\alpha}{2}|z(s)|^2
+
\frac1p|z(s)|^p
<0,
\]
we have $z(s)\ne0$ and,
\[
\frac1p|z(s)|^p
<
\frac{\alpha}{2}|z(s)|^2.
\]
Thus,
\[
0<|z(s)|
<
\left(\frac{p\alpha}{2}\right)^{1/(p-2)},
\]
which gives a uniform upper bound. To obtain a lower bound, fix
$s_0\in\R$ and set $\eta:=-\mathcal E_z(s_0)>0$.
Monotonicity of the energy gives, for every $s\ge s_0$,
\[
-\frac{\alpha}{2}|z(s)|^2
\le
\mathcal E_z(s)
\le
\mathcal E_z(s_0)
=
-\eta.
\]
Consequently,
\[
m:=\left(\frac{2\eta}{\alpha}\right)^{1/2}
\le
|z(s)|
<
M:=\left(\frac{p\alpha}{2}\right)^{1/(p-2)}
\qquad\text{for every }s\ge s_0.
\]
Hence $z$ remains bounded and bounded away from zero on $[s_0,\infty)$. Set,
\[
A:=k(2-k)>0,
\qquad
\beta:=2(1-k)>0,
\qquad
V(y):=-\frac{A}{2}y^2+\frac1p|y|^p.
\]
Then,
\[
z_{ss}+\beta z_s-Az+|z|^{p-2}z=0,
\]
and the energy satisfies,
\[
\mathcal E_z(s)
=
\frac12|z_s(s)|^2+V(z(s)),
\qquad
\mathcal E_z'(s)
=
-\beta|z_s(s)|^2.
\]

The energy is nonincreasing and bounded below by
$\min_{y\in\R}V(y)$. Hence there exists
$\mathcal E_\infty\in\R$ such that,
\[
\mathcal E_z(s)\longrightarrow\mathcal E_\infty
\qquad\text{as }s\to\infty.
\]
Integrating the dissipation identity gives,
\[
\beta\int_{s_0}^{\infty}|z_s(s)|^2\,ds
=
\mathcal E_z(s_0)-\mathcal E_\infty
<\infty.
\]
In particular, $z_s\in L^2((s_0,\infty))$. Furthermore,
\[
\frac12|z_s(s)|^2
=
\mathcal E_z(s)-V(z(s))
\le
\mathcal E_z(s_0)-\min_{y\in\R}V(y),
\]
so $z_s$ is bounded. The ODE then shows that,
\[
z_{ss}=-\beta z_s+Az-z^{p-1},
\]
is bounded as well. Consequently, $z_s$ is uniformly
continuous on $[s_0,\infty)$. Next, note that, 
\[
z_s(s)\longrightarrow0.
\]
If not, there exist $\varepsilon>0$
and a sequence $s_j\to\infty$ such that
$|z_s(s_j)|\ge\varepsilon$. Uniform continuity would
give $\delta>0$, independent of $j$, such that,
\[
|z_s(s)|\ge\frac{\varepsilon}{2}
\qquad\text{for }s\in[s_j,s_j+\delta].
\]
Passing to a subsequence, we may assume that these
intervals are disjoint. This contradicts
$\int_{s_0}^{\infty}|z_s|^2\,ds<\infty$. To identify the limit of $z$, let $L$ be any accumulation
point as $s\to\infty$, and choose $s_j\to\infty$ such that
$z(s_j)\to L$. Since $m\le z(s)\le M$, we have $L>0$.
Moreover,
\[
\sup_{0\le\tau\le1}|z(s_j+\tau)-L|
\le
|z(s_j)-L|
+
\sup_{\sigma\ge s_j}|z_s(\sigma)|
\longrightarrow0.
\]
Integrating the ODE over $[s_j,s_j+1]$ yields,
\[
\begin{aligned}
&z_s(s_j+1)-z_s(s_j)
+\beta\bigl(z(s_j+1)-z(s_j)\bigr)\\
&\qquad=
\int_0^1
\left(
Az(s_j+\tau)-z(s_j+\tau)^{p-1}
\right)\,d\tau.
\end{aligned}
\]
Passing to the limit gives,
\[
AL-L^{p-1}=0.
\]
Since $L>0$, necessarily,
\[
L=A^{1/(p-2)}
=
\bigl[k(2-k)\bigr]^{1/(p-2)}.
\]
Thus every accumulation point of $z$ has the same value.
Since $z$ is bounded, it follows that,
\[
z(s)\longrightarrow
\bigl[k(2-k)\bigr]^{1/(p-2)},
\qquad
z_s(s)\longrightarrow0.
\]
Finally, recalling that $r(t)=t^{-k}z(\log t)$, we obtain,
\[
r(t)\sim
\bigl[k(2-k)\bigr]^{1/(p-2)}t^{-k}
\qquad\text{as }t\to\infty.
\]
It follows that,
\[
t^2|r(t)|^{p-2}
\to
k(2-k)
=
\frac{4(p-3)}{(p-2)^2}.
\]
Therefore,
\[
t^2\nabla^2 f(X(t))
\to
K_\infty
:=
\frac{4(p-3)}{(p-2)^2}
\left(
I+(p-2)\mathbf{e}\mathbf{e}^\top
\right).
\]
The eigenvalue of $K_\infty$ in the direction $\mathbf{e}$ is,
\[
\lambda_{\parallel}(K_\infty)
=
(p-1)\frac{4(p-3)}{(p-2)^2}
=
4-\frac{4}{(p-2)^2}.
\]
Since $p>4$, we have,
\[
\lambda_{\parallel}(K_\infty)>1.
\]

Define $\delta
:=
\frac12\left(
\lambda_{\parallel}(K_\infty)-1
\right)>0$. For all sufficiently large $s$,
\[
\mathbf{e}^\top
\left(
e^{2s}\nabla^2f(X(e^s))
\right)\mathbf{e}
\ge
1+\delta.
\]

Choosing a variation of the form, $u(s)=\phi(s)\mathbf{e}$, the transformed second variation satisfies,
\[
Q_X[h]
\le
\int
\left(
|\phi_s|^2-\delta|\phi|^2
\right)\,ds.
\]
Choose $s_0$ large enough that the above
bound holds for all $s\ge s_0$. Fix
$\ell>\pi/\sqrt{\delta}$ and, on $[s_0,s_0+\ell]$, set,
\[
\phi(s)
=
\sin\left(\frac{\pi(s-s_0)}{\ell}\right).
\]
Then, $Q_X[h]<0$. Such intervals can be placed arbitrarily far out in time, so $X$ is not
eventually locally minimizing. 

By combining the four cases, we see that for $p\ge 2$, the regular Nesterov trajectory is eventually locally minimizing if and only if $p=4$.
\end{proof}

\begin{rmk}[Intuition for Theorem \ref{thm:power-characterization}]
As we explained in Corollary~\ref{cor:time-square-hessian},
$t^{2}\nabla^{2}f(X(t))$ is closely related to eventual local
minimality. With that in mind, we can better understand what
Theorem~\ref{thm:power-characterization} says.
When $p=2$, $t^{2}\nabla^{2}f(X(t))$ becomes large as $t$
becomes large, and this allows us to construct variations
with negative second variation over sufficiently long
intervals. The trajectory $r(t)$ also oscillates with
decaying amplitude.
For $2<p<4$, the trajectory oscillates and has infinitely
many zeros. Using the portions of the trajectory between
consecutive zeros as variations, we obtain negative
second variations.
For $p=4$, $t^{2}\nabla^{2}f(X(t))$ goes to zero, so we can
apply Corollary~\ref{cor:time-square-hessian}.
For $p>4$, $t^{2}\nabla^{2}f(X(t))$ has a limit whose
eigenvalue in the direction of the trajectory is greater
than $1$. Using this direction, we can construct variations
with negative second variation over sufficiently long
intervals in logarithmic time.
\end{rmk}
Theorem \ref{thm:power-characterization} provides a full
characterization of eventual local minimality for power objective
functions. We also note that the assumption $x_0\neq0$
is essential in Theorem \ref{thm:power-characterization}. When $x_0=0$, the
regular Nesterov trajectory is the trivial solution $X\equiv0$. The following proposition describes when this trajectory is eventually locally minimizing.
\begin{prop}
\label{prop:initial-zero}
When $x_0=0$, the trivial trajectory $X\equiv0$ is eventually
locally minimizing for every $p>2$, but not for $p=2$.
\end{prop}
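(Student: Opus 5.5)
The plan is to compute the Hessian along the trivial trajectory and then reduce to the two tools already available: Corollary~\ref{cor:time-square-hessian} for the positive direction, and the explicit $p=2$ construction in the proof of Theorem~\ref{thm:power-characterization} for the negative one.

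\textbf{Regularity and the Hessian.} First I check that $X\equiv0$ is indeed the regular solution. Since $\nabla f(0)=0$, the zero curve solves the integral equation of Lemma~\ref{lem:reg-nesterov-sol}, and uniqueness applies because $\nabla f(x)=\Norm{x}^{p-2}x$ is locally Lipschitz for $p\ge2$. Next I compute the Hessian at the origin. For $p>2$, $f\in C^2$ and
\[
\nabla^2 f(x)=\Norm{x}^{p-2}\left(I+(p-2)\mathbf{e}\mathbf{e}^\top\right)\to 0 \quad\text{as } x\to 0,
\]
so $\nabla^2 f(0)=0$. For $p=2$, $\nabla^2 f\equiv I$.

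\textbf{Case $p>2$.} Here $t^2\nabla^2 f(X(t))=0$ for all $t$, so
\[
\limsup_{t\to\infty}\lambda_{\max}\left(t^2\nabla^2 f(X(t))\right)=0<1.
\]
Corollary~\ref{cor:time-square-hessian} then gives that $X\equiv0$ is eventually strictly locally minimizing. Concretely, the logarithmic-time form of Proposition~\ref{prop:logaritmic-change-variable} reads
\[
Q[h]=\int\left(\Norm{\dot u}^2+\Norm{u}^2\right)ds,
\]
which is coercive on every interval, and this holds with any $T_0>0$.

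\textbf{Case $p=2$.} The second variation is exactly the trajectory-independent form already treated in the $p=2$ case of Theorem~\ref{thm:power-characterization}. That argument never used $x_0\ne0$, so it applies verbatim. For any $\ell>0$ and $S$ large enough that $e^{2S}>1+\pi^2/\ell^2$, the test function
\[
u(s)=\sin\left(\frac{\pi(s-S)}{\ell}\right)v
\]
gives negative second variation on $[e^S,e^{S+\ell}]$. Since $S$ can be taken arbitrarily large, no $T_0$ works. By the necessity direction of Proposition~\ref{prop:eventual-minimality-second-variation}, $X\equiv0$ is not eventually locally minimizing.

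There is no genuine obstacle in this argument. The only point needing care is justifying $\nabla^2 f(0)=0$, which requires $p>2$: $f$ is $C^2$ at the origin precisely because $\Norm{x}^{p-2}\to0$. This is also exactly where the dichotomy between $p>2$ and $p=2$ arises.
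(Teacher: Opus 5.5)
Your proposal is correct and follows essentially the paper's argument. The only difference is in the case $p>2$: the paper uses $\nabla^2 f(0)=0$ to get coercivity of $\int_a^b t^3\Norm{\dot h}^2\,dt$ directly from the Poincar\'e inequality in the original time variable, whereas you go through Corollary~\ref{cor:time-square-hessian} and the logarithmic-time form, which is equivalent; for $p=2$ you reuse the trajectory-independent instability construction of Theorem~\ref{thm:power-characterization} exactly as the paper does.
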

\begin{proof}
If $p>2$, then $\nabla^2f(0)=0$, so on every finite
interval $0<a<b<\infty$ the second variation is,
\[
Q_0[h]
=
\int_a^b t^3\Norm{\dot h(t)}^2\,dt.
\]
By the Poincar\'e inequality,
\[
Q_0[h]
\ge
\frac{a^3}{1+(b-a)^2/\pi^2}
\Norm{h}_{H^1(a,b)}^2,
\]
for every $h\in H_0^1((a,b);\R^n)$.
Thus the zero trajectory is a strict $H^1$-local
minimizer on every such interval $[a,b]$, and hence is
eventually locally minimizing for every $p>2$. For $p=2$, the second variation is independent of
the underlying trajectory. Consequently, the
instability argument for the quadratic case also
applies to the zero trajectory, which is not
eventually locally minimizing.
\end{proof}
We have presented a self-contained proof based on
ODE analysis. The proof is inspired by a connection between
Nesterov trajectories and radial solutions of a class of
semilinear elliptic PDEs. We elaborate on this connection in the
next section, shedding more light on why the exponent $p=4$ is special
in our analysis.
\section{Generalized Nesterov Dynamics and Semilinear Elliptic PDEs}
\label{sec:pde-interpretation}
A natural question is why $p=4$ is the only exponent for which non-trivial Nesterov trajectories are eventually locally minimizing, regardless of the dimension of the optimization problem. One way to understand this phenomenon is through the connection between these trajectories and radial solutions of a class of semilinear elliptic PDEs known as Lane--Emden equations \citep{gidas1981global,farina2007classification,fazly2019monotonicity}. In this section, we elaborate on this connection. 

\subsection{Radial Solutions and Elliptic Stability}

Let \(\gamma:[a,b]\to\R^n\) be a \(C^2\) curve satisfying the {\it generalized Nesterov ODE},
\[
\ddot{\gamma}(t)+\frac{d-1}{t}\dot{\gamma}(t)+\nabla f(\gamma(t))=0.
\]
Define the annulus $D_{a,b}:=\left\{
\xi\in\R^d:
a<\Norm{\xi}<b
\right\}$ and the radial map,
\begin{align}
\label{eq:radial-map}
U:D_{a,b}\to\R^n, \quad
U(\xi)=\gamma(\Norm{\xi})    
\end{align}

Then we have the following lemma.

\begin{lem}
\label{lem:bridge}
The function \(U\) satisfies the semilinear elliptic system,
\[
\Delta U+\nabla f(U)=0
\qquad\text{in }D_{a,b}.
\]
\end{lem}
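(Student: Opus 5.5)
The plan is a direct computation of the Laplacian of the radial map \eqref{eq:radial-map}, one component at a time. Write $\rho:=\Norm{\xi}$, which is smooth on $D_{a,b}$ because $\rho>a>0$ there. The composition $U=\gamma\circ\rho$ is therefore $C^2$ on the annulus, and each component $U_j(\xi)=\gamma_j(\rho)$ is a scalar radial function. Since $\Delta U$ and $\nabla f(U)$ are both computed componentwise, the claim reduces to the scalar radial Laplacian formula
\[
\Delta\bigl(g(\Norm{\xi})\bigr)=\ddot g(\rho)+\frac{d-1}{\rho}\dot g(\rho),
\qquad g\in C^2([a,b]).
\]

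I would prove this formula with the chain rule. First, $\partial_{\xi_i}\rho=\xi_i/\rho$, so $\partial_{\xi_i}g(\rho)=\dot g(\rho)\,\xi_i/\rho$. Differentiating again gives
\[
\partial_{\xi_i}^2 g(\rho)=\ddot g(\rho)\frac{\xi_i^2}{\rho^2}+\dot g(\rho)\left(\frac{1}{\rho}-\frac{\xi_i^2}{\rho^3}\right).
\]
Summing over $i=1,\dots,d$ and using $\sum_i\xi_i^2=\rho^2$, the first terms add up to $\ddot g(\rho)$ and the second terms add up to $\dot g(\rho)(d-1)/\rho$.

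Applying this to each component gives
\[
\Delta U(\xi)=\ddot\gamma(\rho)+\frac{d-1}{\rho}\dot\gamma(\rho).
\]
On the other hand, $\nabla f(U(\xi))=\nabla f(\gamma(\rho))$. Adding the two expressions and evaluating the generalized Nesterov ODE at $t=\rho\in(a,b)$ shows that $\Delta U+\nabla f(U)=0$ pointwise in $D_{a,b}$.

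There is no real obstacle here. The only points needing care are that $\rho$ is bounded away from $0$ on the annulus, which avoids any singularity at the origin, and that $\gamma\in C^2$ makes $U$ a classical $C^2$ solution. For $d=4$ this recovers the link to the Nesterov ODE \eqref{eq:nesterov-ode} that the introduction refers to.
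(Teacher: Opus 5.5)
Your proof is correct and follows the paper's approach. Both arguments reduce to the componentwise radial Laplacian formula $\Delta U(\xi)=\ddot\gamma(r)+\frac{d-1}{r}\dot\gamma(r)$ with $r=\Norm{\xi}$, then evaluate the generalized Nesterov ODE at $t=r$; the paper simply quotes that formula, whereas you derive it by the chain rule and note that $\Norm{\xi}>a>0$ keeps you away from the singularity at the origin, which is a harmless and welcome addition.
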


\begin{proof}
Let $r=\Norm{\xi}$. Since \(U(\xi)=\gamma(r)\) is radial, the Laplacian in \(\R^d\) is given componentwise by,
\[
\Delta U(\xi)
=
\ddot{\gamma}(r)+\frac{d-1}{r}\dot{\gamma}(r).
\]
Since \(\gamma\) satisfies the generalized Nesterov ODE,
\[
\ddot{\gamma}(r)
+
\frac{d-1}{r}\dot{\gamma}(r)
+
\nabla f(\gamma(r))
=
0.
\]
Using \(U(\xi)=\gamma(r)\), we obtain,
\[
\Delta U(\xi)+\nabla f(U(\xi))=0.
\]
\end{proof}

For a $C^2$ function $f:\R^n\to\R$, Lemma \ref{lem:bridge} states that radial solutions of the PDE system,
\begin{align}
\label{eq:semi-linear-PDE}
\Delta U+\nabla f(U)=0,
\end{align}

correspond to generalized Nesterov trajectories, and vice versa. This system is variational where $U:\R^d\to\R^n$. Associated with this system is the formal energy functional,
\[
\mathcal J(U)
=
\int_{\R^d}
\left(
\frac{1}{2}\Norm{\nabla U}^2
-
f(U)
\right)\,dx.
\]

Let $U$ be a solution of the elliptic system. The second variation of
$\mathcal{J}$ at $U$ in the direction
$\Phi\in C_c^\infty(\R^d;\R^n)$ is given by,
\[
Q_U(\Phi)
:=
\delta^2\mathcal J(U)[\Phi,\Phi]
=
\int_{\R^d}
\left(
\Norm{\nabla \Phi}^2
-
\left\langle
\nabla^2 f(U)\Phi,\Phi
\right\rangle
\right)\,dx.
\]

\begin{deff}[Stable solution]
\label{deff:stable-sol-elliptic}
A solution $U$ is called \emph{stable} if,
\[
Q_U(\Phi)\geq 0,
\]
for every $\Phi\in C_c^\infty(\R^d;\R^n)$.
\end{deff}

\begin{deff}[Stable outside a compact set]
\label{deff:stable-out-compact}
A solution $U$ is called \emph{stable outside a compact set} if there exists $R_0>0$ such that,
\[
Q_U(\Phi)\geq 0,
\]
for every $\Phi\in C_c^\infty(\R^d\setminus B_{R_0};\R^n)$.
\end{deff}
\subsection{Lane–Emden Solutions}
Let $f(x)=\frac{1}{p}\Norm{x}^p,p\ge2$. Then,
\[
\nabla f(x)=\Norm{x}^{p-2}x,
\]
Thus the elliptic system (\ref{eq:semi-linear-PDE}) becomes,
\[
\Delta U+\Norm{U}^{p-2}U=0,
\qquad U:\R^d\to\R^n.
\]
This is the vector-valued Lane--Emden system \citep{gidas1981global,farina2007classification, fazly2019monotonicity}. In the scalar
case, $u:\R^d\to\R$, it reads,
\[
\Delta u+|u|^{p-2}u=0.
\]
To compare our notation with the standard Lane--Emden equation,
write,
\[
-\Delta u=|u|^{q-1}u,
\]
which becomes $-\Delta u=u^q$ for positive solutions.
For our power objective, the nonlinearity is
$|u|^{p-2}u$. Matching the powers gives,
\[
q-1=p-2,
\qquad\text{that is,}\qquad
q=p-1.
\]
 \begin{rmk}[What is special about $p=4$?]
The Sobolev critical exponent in dimension $d\ge3$ is $p=2d/(d-2)$, which equals $4$ when $d=4$. At this exponent, the energy $\mathcal J$ is invariant under the rescaling,
\[
u_\lambda(x)=\lambda^{(d-2)/2}u(\lambda x),
\qquad \lambda>0.
\]
The associated scalar equation admits explicit positive solutions known as Aubin--Talenti bubbles \citep{aubin1976problemes,talenti1976best}. These bubbles have Morse index one and are therefore stable outside a compact set. When $d=4$, the bubbles centered at the origin can be identified, through the radial map \eqref{eq:radial-map}, with the Nesterov trajectories appearing in the proof of Theorem \ref{thm:power-characterization} for $p=4$. Moreover, one can use this scaling invariance and Noether's theorem to obtain the Pohozaev quantity used in that proof. This quantity is conserved when $p=4$, while its derivative has a fixed sign when $p\neq4$. These properties play a key role in our analysis of Theorem \ref{thm:power-characterization} and explain why $p=4$ is special in the standard Nesterov dynamics. When $p>4$, the Joseph--Lundgren theory \citep{JosephLundgren1973} explains when the rescaled phase-plane trajectory spirals toward its positive equilibrium. For $d=4$, the Joseph--Lundgren exponent is infinite, and this spiralling behavior occurs for every $p>4$. 
\end{rmk}

\section*{AI Statement}
GPT-6 Astra (OpenAI) drew our attention to relevant results on the stability of solutions to semilinear elliptic equations. We then studied the related PDE literature and developed and verified a self-contained proof of the main theorem. Beyond this, we used AI to polish the text, draft presentations of known results, refine proof arguments, and obtain feedback on the manuscript. The authors have checked and verified all content line by line and take full responsibility for it.

\addcontentsline{toc}{section}{References}
\bibliography{refs}

@article{JosephLundgren1973,
  author  = {Joseph, D. D. and Lundgren, T. S.},
  title   = {Quasilinear {D}irichlet problems driven by positive sources},
  journal = {Archive for Rational Mechanics and Analysis},
  volume  = {49},
  pages   = {241--269},
  year    = {1973}
}

@article{aubin1976problemes,
  title={Probl{\`e}mes isop{\'e}rim{\'e}triques et espaces de {Sobolev}},
  author={Aubin, Thierry},
  journal={Journal of differential geometry},
  volume={11},
  number={4},
  pages={573--598},
  year={1976},
  publisher={Lehigh University}
}

@article{talenti1976best,
  title={Best constant in {Sobolev} inequality},
  author={Talenti, Giorgio},
  journal={Annali di Matematica pura ed Applicata},
  volume={110},
  number={1},
  pages={353--372},
  year={1976},
  publisher={Springer-Verlag, Berlin/Heidelberg}
}

@article{gidas1981global,
  title={Global and local behavior of positive solutions of nonlinear elliptic equations},
  author={Gidas, Basilis and Spruck, Joel},
  journal={Communications on Pure and Applied Mathematics},
  volume={34},
  number={4},
  pages={525--598},
  year={1981},
  publisher={Wiley Subscription Services, Inc., A Wiley Company, New York}
}

@article{fazly2019monotonicity,
  title={Monotonicity formulas for coupled elliptic gradient systems with applications},
  author={Fazly, Mostafa and Shahgholian, Henrik},
  journal={Advances in Nonlinear Analysis},
  volume={9},
  number={1},
  pages={479--495},
  year={2020},
  publisher={De Gruyter}
}

@inproceedings{pohozaev1965eigenfunctions,
  title={Eigenfunctions of the equation {$\Delta u+ \lambda f (u)= 0$}},
  author={Pohozaev, Stanislav},
  booktitle={Soviet Math. Doklady.},
  volume={6},
  pages={1408--1411},
  year={1965}
}

@article{wright2022optimization,
  title={Optimization for data analysis},
  author={Wright, Stephen J and Recht, Benjamin},
  journal={Cambridge University Press},
  year={2022}
}

@article{farina2007classification,
  title={On the classification of solutions of the {Lane--Emden} equation on unbounded domains of {$\mathbb{R}^{N}$}},
  author={Farina, Alberto},
  journal={Journal de math{\'e}matiques pures et appliqu{\'e}es},
  volume={87},
  number={5},
  pages={537--561},
  year={2007},
  publisher={Elsevier}
}

@inproceedings{muehlebach2019dynamical,
  title={A dynamical systems perspective on {Nesterov} acceleration},
  author={Muehlebach, Michael and Jordan, Michael},
  booktitle={Proceedings of the 36th International Conference on Machine Learning},
  series={Proceedings of Machine Learning Research},
  volume={97},
  pages={4656--4662},
  year={2019},
  publisher={PMLR}
}

@article{muehlebach2021optimization,
  title={Optimization with momentum: Dynamical, control-theoretic, and symplectic perspectives},
  author={Muehlebach, Michael and Jordan, Michael I},
  journal={Journal of Machine Learning Research},
  volume={22},
  number={73},
  pages={1--50},
  year={2021}
}

@article{kusano1987oscillation,
  title={Oscillation theory of entire solutions of second order superlinear elliptic equations},
  author={Kusano, Takasi and Naito, Manabu},
  journal={Funkcial. Ekvac},
  volume={30},
  number={2-3},
  pages={269--282},
  year={1987}
}

@article{kawano1990generalized,
  title={A generalized {Pohozaev} identity and its applications},
  author={Kawano, Nichiro and Ni, Wei-Ming and Yotsutani, Shoji},
  journal={Journal of the Mathematical Society of Japan},
  volume={42},
  number={3},
  pages={541--564},
  year={1990},
  publisher={The Mathematical Society of Japan}
}

@article{nesterov1983method,
  author  = {Nesterov, Yurii},
  title   = {A method for solving the convex programming problem
             with convergence rate {$O(1/k^2)$}},
  journal = {Doklady Akademii Nauk {SSSR}},
  volume  = {269},
  number  = {3},
  pages   = {543--547},
  year    = {1983}
}

@article{wibisono2016variational,
  title={A variational perspective on accelerated methods in optimization},
  author={Wibisono, Andre and Wilson, Ashia C and Jordan, Michael I},
  journal={Proceedings of the National Academy of Sciences},
  volume={113},
  number={47},
  pages={E7351--E7358},
  year={2016},
  publisher={National Academy of Sciences}
}

@article{su2016differential,
  title={A differential equation for modeling {Nesterov}'s accelerated gradient method: Theory and insights},
  author={Su, Weijie and Boyd, Stephen and {Candès}, Emmanuel J},
  journal={Journal of Machine Learning Research},
  volume={17},
  number={153},
  pages={1--43},
  year={2016}
}

@inproceedings{jordan2018dynamical,
  title={Dynamical, symplectic and stochastic perspectives on gradient-based optimization},
  author={Jordan, Michael I},
  booktitle={Proceedings of the international congress of mathematicians: Rio de Janeiro 2018},
  pages={523--549},
  year={2018},
  organization={World Scientific}
}

@article{zhang2021rethinking,
  title={Rethinking the variational interpretation of accelerated optimization methods},
  author={Zhang, Peiyuan and Orvieto, Antonio and Daneshmand, Hadi},
  journal={Advances in Neural Information Processing Systems},
  volume={34},
  pages={14396--14406},
  year={2021}
}
\bibliographystyle{abbrvnat}

\appendix

\section{Additional Proofs}
\label{sec:proof}

% \sw{I suggest to move this section to an appendix}\peyman{standard proof are now in this section}

In this section, we provide the proofs omitted from earlier sections. These proofs are standard and are included for completeness.
\begin{proof}[Proof of Theorem \ref{thm:cont-grad-descent}]
We first observe that the objective decreases along the gradient flow. Indeed,
\[
\frac{d}{dt}f(X(t))
=
\left\langle
\nabla f(X(t)),\dot X(t)
\right\rangle.
\]
Using $\dot X(t)=-\nabla f(X(t))$, we obtain,
\[
\frac{d}{dt}f(X(t))
=
-\Norm{\nabla f(X(t))}^2
\le0.
\]
Hence \(t\mapsto f(X(t))\) is nonincreasing. Now fix \(x^\star\in\operatorname{argmin} f\), and define, $V(t)
=
\frac12\Norm{X(t)-x^\star}^2$. Differentiating gives,
\[
\dot V(t)
=
\left\langle
X(t)-x^\star,\dot X(t)
\right\rangle
=
-\left\langle
X(t)-x^\star,\nabla f(X(t))
\right\rangle.
\]
By convexity of \(f\), we have,
\begin{equation} \label{eq:so1}
\left\langle
\nabla f(X(t)),X(t)-x^\star
\right\rangle
\ge
f(X(t))-f(x^\star).
\end{equation}
Therefore,
\[
\dot V(t)
\le
-\left(f(X(t))-f(x^\star)\right).
\]
Integrating from \(0\) to \(t\), we obtain,
\[
\int_0^t
\left(
f(X(s))-f(x^\star)
\right)\,ds
\le
V(0)-V(t)
\le
V(0).
\]
Since \(f(X(s))\) is nonincreasing,
\[
f(X(s))-f(x^\star)
\ge
f(X(t))-f(x^\star),
\qquad 0\le s\le t.
\]
Consequently, we have,
\[
t\left(f(X(t))-f(x^\star)\right)
\le
\int_0^t
\left(
f(X(s))-f(x^\star)
\right)\,ds
\le
\frac12\Norm{x_0-x^\star}^2.
\]
Dividing by \(t\) gives,
\[
f(X(t))-f(x^\star)
\le
\frac{\Norm{x_0-x^\star}^2}{2t},
\]
which proves the claim.
\end{proof}
\begin{proof}[Proof of Theorem \ref{thm:nesterov}]
Fix \(x^\star\in\operatorname{argmin} f\), and consider the Lyapunov function, 
\begin{equation} \label{eq:defE}
\mathcal E(t)
=
t^2\bigl(f(X(t))-f(x^\star)\bigr)
+
2\Norm{
X(t)+\frac{t}{2}\dot X(t)-x^\star
}^2.
\end{equation}
We claim that \(\mathcal E(t)\) is nonincreasing. Let,
\[
Z(t)=X(t)+\frac{t}{2}\dot X(t)-x^\star.
\]
Then,
\[
\dot Z(t)
=
\dot X(t)+\frac{1}{2}\dot X(t)+\frac{t}{2}\ddot X(t)
=
\frac{3}{2}\dot X(t)+\frac{t}{2}\ddot X(t).
\]
Using the Nesterov equation,
\[
\ddot X(t)
=
-\frac{3}{t}\dot X(t)-\nabla f(X(t)),
\]
we obtain,
\[
\dot Z(t)
=
-\frac{t}{2}\nabla f(X(t)).
\]

Now differentiate \(\mathcal E(t)\) to obtain,
\[
\begin{aligned}
\dot{\mathcal E}(t)
&=
2t\bigl(f(X(t))-f(x^\star)\bigr)
+
t^2\left\langle \nabla f(X(t)),\dot X(t)\right\rangle
+
4\langle Z(t),\dot Z(t)\rangle \\
&=
2t\bigl(f(X(t))-f(x^\star)\bigr)
+
t^2\left\langle \nabla f(X(t)),\dot X(t)\right\rangle
-
2t\left\langle Z(t),\nabla f(X(t))\right\rangle.
\end{aligned}
\]
By substituting the definition of \(Z(t)\), we find that,
\[
\begin{aligned}
\dot{\mathcal E}(t)
&=
2t\bigl(f(X(t))-f(x^\star)\bigr)
+
t^2\left\langle \nabla f(X(t)),\dot X(t)\right\rangle -2t
\left\langle
X(t)+\frac{t}{2}\dot X(t)-x^\star,
\nabla f(X(t))
\right\rangle.
\end{aligned}
\]
The terms involving \(\dot X(t)\) cancel, and hence,
\[
\dot{\mathcal E}(t)
=
2t\left(
f(X(t))-f(x^\star)
-
\left\langle
\nabla f(X(t)),X(t)-x^\star
\right\rangle
\right).
\]
By using convexity of \(f\) and substituting from \eqref{eq:so1}, we have that
$\dot{\mathcal E}(t)\le0$. Thus \(\mathcal E(t)\) is nonincreasing. Finally, using the regular initial conditions, $\mathcal E(0)=2\Norm{x_0-x^\star}^2$. Since the second term in \(\mathcal E(t)\) is nonnegative, we obtain
\[
t^2\bigl(f(X(t))-f(x^\star)\bigr)
\le
\mathcal E(t)
\le
\mathcal E(0)
=
2\Norm{x_0-x^\star}^2.
\]
Dividing by \(t^2\) yields the result \eqref{eq:so2}.
\end{proof}
\begin{proof}[Proof of Lemma \ref{lem:reg-nesterov-sol}]
Multiplying the differential equation by $t^3$ gives,
\[
\bigl(t^3\dot X(t)\bigr)'
=
-t^3\nabla f(X(t)).
\]
For a regular solution satisfying $\dot X(0)=0$, integration from $0$ to
$t$ yields,
\[
t^3\dot X(t)
=
-\int_0^t s^3\nabla f(X(s))\,ds,
\]
and therefore,
\[
\dot X(t)
=
-\frac1{t^3}
\int_0^t s^3\nabla f(X(s))\,ds.
\]
Integrating once more from $0$ to $t$ and using $X(0)=x_0$, we obtain,
\[
X(t)
=
x_0-
\int_0^t
\tau^{-3}
\left(
\int_0^\tau
s^3\nabla f(X(s))\,ds
\right)d\tau.
\]

We now prove local existence and uniqueness from this integral equation.
Fix $R>0$, and let,
\[
B_R(x_0)
=
\left\{
x\in\R^n:\Norm{x-x_0}\leq R
\right\}.
\]
Since $\nabla f$ is locally Lipschitz, there are constants $M,L>0$
such that,
\[
\Norm{\nabla f(x)}\leq M,
\]
and,
\[
\Norm{\nabla f(x)-\nabla f(y)}
\leq
L\Norm{x-y},
\]
for all $x,y\in B_R(x_0)$. Consider the closed subset,
\[
\mathcal B
=
\left\{
X\in C([0,T];\R^n):
\sup_{0\leq t\leq T}\Norm{X(t)-x_0}\leq R
\right\},
\]
and define,
\[
(\mathcal TX)(t)
=
x_0-
\int_0^t
\tau^{-3}
\left(
\int_0^\tau
s^3\nabla f(X(s))\,ds
\right)d\tau.
\]
For $X\in\mathcal B$,
\[
\begin{aligned}
\Norm{(\mathcal TX)(t)-x_0}
&\leq
M
\int_0^t
\tau^{-3}
\left(
\int_0^\tau s^3\,ds
\right)d\tau \\
&=
\frac{M}{4}
\int_0^t \tau\,d\tau \\
&=
\frac{M}{8}t^2.
\end{aligned}
\]
Hence $\mathcal T$ maps $\mathcal B$ into itself when $\frac{M}{8}T^2\leq R$. Similarly, for $X,Y\in\mathcal B$,
\[
\begin{aligned}
\Norm{(\mathcal TX)(t)-(\mathcal TY)(t)}
&\leq
L\Norm{X-Y}_{C^0}
\int_0^t
\tau^{-3}
\left(
\int_0^\tau s^3\,ds
\right)d\tau \\
&=
\frac{L}{8}t^2\Norm{X-Y}_{C^0}.
\end{aligned}
\]
Therefore,
\[
\Norm{\mathcal TX-\mathcal TY}_{C^0}
\leq
\frac{L}{8}T^2
\Norm{X-Y}_{C^0}.
\]
Choosing $T$ even smaller, if necessary, so that, $\frac{L}{8}T^2<1$, the map $\mathcal T$ is a contraction. By the Banach fixed-point theorem there is a unique $X\in\mathcal B$ satisfying the integral
equation. Next, we verify the behavior at $t=0$. Since $X(t)\to x_0$ and
$\nabla f$ is continuous,
\[
\nabla f(X(s))
=
\nabla f(x_0)+o(1)
\qquad
\text{as }s\to0.
\]
Thus,
\[
\begin{aligned}
\dot X(t)
&=
-\frac1{t^3}
\int_0^t
s^3
\left(
\nabla f(x_0)+o(1)
\right)ds\\
&=
-\frac{t}{4}\nabla f(x_0)+o(t).
\end{aligned}
\]
In particular,
\[
\dot X(t)\to0,
\]
so $\dot X(0)=0$. Integrating this expansion gives,
\[
X(t)
=
x_0-\frac{t^2}{8}\nabla f(x_0)+o(t^2).
\]

Finally, differentiating the integral equation for $t>0$ shows that
the fixed point is a classical solution of the Nesterov equation on
$(0,T]$. This proves existence and uniqueness of the regular local
solution.
\end{proof}

\begin{proof}[Proof of Lemma \ref{lem:global-existence}]
Lemma~2.2 gives a unique regular local solution.
Let $[0,T_{\max})$ be its maximal interval of existence.
Define the mechanical energy,
\[
E(t):=\frac12\|\dot X(t)\|^2+f(X(t)).
\]
For $t>0$, the equation gives,
\[
E'(t)
=
\left\langle
\dot X(t),\ddot X(t)+\nabla f(X(t))
\right\rangle
=
-\frac{3}{t}\|\dot X(t)\|^2
\le 0.
\]
The regular initial conditions therefore imply,
\[
E(t)\le E(0)=f(x_0).
\]

Set $m:=\inf_{\mathbb R^n}f>-\infty$.
Then,
\[
\frac12\|\dot X(t)\|^2
\le f(x_0)-f(X(t))
\le f(x_0)-m.
\]
Consequently, with
$C:=\sqrt{2(f(x_0)-m)}$, we have,
\[
\|\dot X(t)\|\le C,
\qquad
\|X(t)-x_0\|\le Ct,
\qquad 0\le t<T_{\max}.
\]

If $T_{\max}<\infty$, both $X(t)$
and $\dot X(t)$ remain bounded as $t\uparrow T_{\max}$. Now a standard continuation argument extends the
solution beyond $T_{\max}$, a contradiction. Therefore $T_{\max}=\infty$.
\end{proof}
Before proving Lemma \ref{lem:Coercive-second-variation-local-minimality}, we prove the following simple lemma which says that strict positivity of second variation on a finite interval implies coercivity. This lemma implies that Lemma \ref{lem:Coercive-second-variation-local-minimality} also holds when second variation operator is only assumed to be strictly positive.
\begin{lem}
\label{lem:strict-positivity-implies-coercivity}
Let $0<a<b<\infty$, and let
$A\in L^\infty((a,b);\R^{n\times n})$. Define,
\[
Q(h)
:=
\int_a^b t^3
\left(
\Norm{\dot h(t)}^2
-
\langle A(t)h(t),h(t)\rangle
\right)\,dt,
\qquad
h\in H_0^1((a,b);\R^n).
\]
Suppose that $Q(h)>0$ for every nonzero
$h\in H_0^1((a,b);\R^n)$. Then there exists $c>0$ such that,
\[
Q(h)\ge c\Norm{h}_{H^1(a,b)}^2
\qquad
\text{for every }h\in H_0^1((a,b);\R^n).
\]
\end{lem}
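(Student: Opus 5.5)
We argue by contradiction, using weak compactness in $H_0^1$ together with the compact embedding $H_0^1((a,b);\R^n)\hookrightarrow L^2((a,b);\R^n)$ (Rellich). Since $Q$ is homogeneous of degree two, it suffices to show
\[
\inf\left\{Q(h): h\in H_0^1((a,b);\R^n),\ \Norm{h}_{H^1(a,b)}=1\right\}>0.
\]
Suppose instead that there is a sequence $h_k$ with $\Norm{h_k}_{H^1(a,b)}=1$ and $Q(h_k)\to c_\ast\le 0$. Note that $Q(h)\ge 0$ for every $h$, so in fact $c_\ast=0$. By boundedness we pass to a subsequence with $h_k\rightharpoonup h$ weakly in $H_0^1$ and $h_k\to h$ strongly in $L^2$. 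Weak limits stay in $H_0^1$, since it is a closed subspace and the endpoint evaluations are continuous linear functionals.

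Next we handle the two parts of $Q$ separately. For the potential term, $A\in L^\infty$ and $t^3\le b^3$ on $[a,b]$, so $h\mapsto\int_a^b t^3\langle A h,h\rangle\,dt$ is continuous in the strong $L^2$ topology. Hence
\[
\int_a^b t^3\langle A h_k,h_k\rangle\,dt \longrightarrow \int_a^b t^3\langle A h,h\rangle\,dt.
\]
For the kinetic term, $\int_a^b t^3\Norm{\dot h}^2\,dt$ is the square of a norm on $L^2$ equivalent to the usual one, because $a^3\le t^3\le b^3$. It is therefore weakly lower semicontinuous, and $\dot h_k\rightharpoonup \dot h$ in $L^2$. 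Combining the two gives $Q(h)\le\liminf_k Q(h_k)=0$. The hypothesis that $Q(h)>0$ for nonzero $h$ then forces $h=0$.

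The main step is to exclude $h=0$. If $h=0$, then $h_k\to0$ in $L^2$, so the potential term tends to $0$. Consequently
\[
a^3\int_a^b\Norm{\dot h_k}^2\,dt\le \int_a^b t^3\Norm{\dot h_k}^2\,dt = Q(h_k)+o(1)\to 0.
\]
Together with $\Norm{h_k}_{L^2}\to0$, this gives $\Norm{h_k}_{H^1(a,b)}\to0$, contradicting $\Norm{h_k}_{H^1(a,b)}=1$. Hence the infimum $c$ is strictly positive, and $Q(h)\ge c\Norm{h}_{H^1(a,b)}^2$ for all $h\in H_0^1((a,b);\R^n)$ by scaling.

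Two points need care. The compactness of the embedding relies on the interval $(a,b)$ being bounded. The bound $t^3\ge a^3>0$ is what converts control of the weighted kinetic term into control of the $H^1$ seminorm; the same bound also yields the weak lower semicontinuity used above.
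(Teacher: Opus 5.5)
Your proposal is correct and follows the paper's proof essentially step for step. Both normalize in $H^1$ and take a subsequence converging weakly in $H_0^1$ and strongly in $L^2$. Both use strong $L^2$ convergence for the potential term and weak lower semicontinuity for the weighted kinetic term to force the limit to be $0$. Both then use $t^3\ge a^3$ to get $\dot h_k\to 0$ in $L^2$, which contradicts $\Norm{h_k}_{H^1(a,b)}=1$.
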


\begin{proof}
Suppose, to the contrary, that no such $c$ exists.
Then there exists a sequence
$h_j\in H_0^1((a,b);\R^n)$ satisfying,
\[
\Norm{h_j}_{H^1(a,b)}=1,
\qquad
Q(h_j)\longrightarrow 0.
\]
By passing to a subsequence we have,
\[
h_j\rightharpoonup h
\quad\text{in }H_0^1((a,b);\R^n),
\qquad
h_j\longrightarrow h
\quad\text{in }L^2((a,b);\R^n),
\]
where the first convergence follows from weak compactness in $H^{1}_{0}$ and second one follows from compact embedding of $H^{1}_{0}$ into $L^{2}$.
Since $A$ is bounded, the $\langle A(t)h(t),h(t)\rangle$ term converges. Next, weak lower semicontinuity of the $L^2$
norm implies,
\[
Q(h)\le \liminf_{j\to\infty}Q(h_j)=0.
\]
Strict positivity therefore forces $h=0$.
Consequently,
\[
a^{3}\int_a^b \Norm{\dot h_j(t)}^2\,dt
\le 
\int_a^b t^3\Norm{\dot h_j(t)}^2\,dt
=
Q(h_j)
+
\int_a^b t^3
\langle A(t)h_j(t),h_j(t)\rangle\,dt
\longrightarrow 0.
\]
Hence 
$\dot h_j\to0$ in $L^2$. Together with $h_j\to0$ in $L^2$,
this gives $\Norm{h_j}_{H^1(a,b)}\to0$, contradicting
the normalization.
\end{proof}
\begin{proof}[Proof of Lemma \ref{lem:Coercive-second-variation-local-minimality}]
Since $\gamma$ is a critical point, its first variation vanishes,
\[
\delta\mathcal{L}(\gamma)[h]=0
\qquad
\text{for every }h\in H_0^1((a,b);\R^n).
\]

Fix $\rho>0$ and consider the compact neighborhood,
\[
K_\rho
:=
\left\{
x\in\R^n:
\operatorname{dist}(x,\gamma([a,b]))\le\rho
\right\}.
\]
Since $f\in C^2(\R^n)$, its Hessian is uniformly continuous on
$K_\rho$. Define,
\[
\omega(r)
:=
\sup_{\substack{x,y\in K_\rho\\ \Norm{x-y}\le r}}
\Norm{\nabla^2f(x)-\nabla^2f(y)}_{\mathrm{op}}.
\]
Then $\omega$ is nondecreasing and satisfies
$\omega(r)\to0$ as $r\to0$. Consider $h\in H_0^1((a,b);\R^n)$ with
$\Norm{h}_{L^\infty}\le\rho$.
For each fixed $t\in[a,b]$, define,
\[
g_t(\theta)
:=
f(\gamma(t)+\theta h(t)),
\qquad \theta\in[0,1].
\]
The chain rule gives,
\[
g_t'(0)
=
\left\langle\nabla f(\gamma(t)),h(t)\right\rangle,
\]
and,
\[
g_t''(\theta)
=
\left\langle
\nabla^2f(\gamma(t)+\theta h(t))h(t),h(t)
\right\rangle.
\]
Applying the one-variable Taylor formula,
\[
g_t(1)
=
g_t(0)+g_t'(0)
+
\int_0^1(1-\theta)g_t''(\theta)\,d\theta,
\]
and adding and subtracting $g_t''(0)$ inside the integral yields,
\[
\begin{aligned}
f(\gamma(t)+h(t))
={}&
f(\gamma(t))
+
\left\langle\nabla f(\gamma(t)),h(t)\right\rangle\\
&+
\frac12
\left\langle
\nabla^2f(\gamma(t))h(t),h(t)
\right\rangle
+
R(t,h),
\end{aligned}
\]
where,
\[
R(t,h)
:=
\int_0^1(1-\theta)
\left\langle
\left[
\nabla^2f(\gamma(t)+\theta h(t))
-
\nabla^2f(\gamma(t))
\right]h(t),h(t)
\right\rangle\,d\theta.
\]
All points $\gamma(t)+\theta h(t)$ belong to $K_\rho$.
Consequently,
\[
\begin{aligned}
|R(t,h)|
&\le
\int_0^1(1-\theta)
\omega\bigl(\theta\Norm{h(t)}\bigr)
\Norm{h(t)}^2\,d\theta\\
&\le
\frac12\omega(\Norm{h}_{L^\infty})
\Norm{h(t)}^2.
\end{aligned}
\]

The kinetic term expands as follows for almost every $t\in(a,b)$,
\[
\frac12\Norm{\dot\gamma(t)+\dot h(t)}^2
=
\frac12\Norm{\dot\gamma(t)}^2
+
\left\langle\dot\gamma(t),\dot h(t)\right\rangle
+
\frac12\Norm{\dot h(t)}^2.
\]
Combining this identity with the Taylor expansion of the potential
term gives,
\[
\begin{aligned}
\mathcal{L}(\gamma+h)-\mathcal{L}(\gamma)
={}&
\int_a^b t^3
\left[
\frac12\left(
\Norm{\dot\gamma+\dot h}^2-\Norm{\dot\gamma}^2
\right)
-
\left(f(\gamma+h)-f(\gamma)\right)
\right]dt\\
={}&
\int_a^b t^3
\left(
\left\langle\dot\gamma,\dot h\right\rangle
-
\left\langle\nabla f(\gamma),h\right\rangle
\right)dt\\
&+
\frac12\int_a^b t^3
\left(
\Norm{\dot h}^2
-
\left\langle\nabla^2f(\gamma)h,h\right\rangle
\right)dt
-
\int_a^b t^3R(t,h)\,dt.
\end{aligned}
\]
Here the first integral contains the terms linear in $h$, and
the second contains the quadratic terms. By the formulas for
the first and second variations,
\[
\delta\mathcal{L}(\gamma)[h]
=
\int_a^b t^3
\left(
\left\langle\dot\gamma,\dot h\right\rangle
-
\left\langle\nabla f(\gamma),h\right\rangle
\right)dt,
\]
and,
\[
\delta^2\mathcal{L}(\gamma)[h,h]
=
\int_a^b t^3
\left(
\Norm{\dot h}^2
-
\left\langle\nabla^2f(\gamma)h,h\right\rangle
\right)dt.
\]
Therefore,
\[
\mathcal{L}(\gamma+h)-\mathcal{L}(\gamma)
=
\delta\mathcal{L}(\gamma)[h]
+
\frac12\delta^2\mathcal{L}(\gamma)[h,h]
-
\int_a^b t^3R(t,h)\,dt.
\]
Since $\gamma$ is a critical point and $h$ preserves the endpoints,
$\delta\mathcal{L}(\gamma)[h]=0$. Hence,
\[
\mathcal{L}(\gamma+h)-\mathcal{L}(\gamma)
=
\frac12\delta^2\mathcal{L}(\gamma)[h,h]
-
\int_a^b t^3R(t,h)\,dt.
\]
By coercivity,
\[
\frac12\delta^2\mathcal{L}(\gamma)[h,h]
\ge
\frac{c}{2}\Norm{h}_{H^1(a,b)}^2.
\]

Since the time interval is one-dimensional, the Sobolev embedding,
\[
H^1((a,b);\R^n)\hookrightarrow C^0([a,b];\R^n),
\]
implies that there exists $C_1>0$ such that,
\[
\Norm{h}_{L^\infty}
\le
C_1\Norm{h}_{H^1(a,b)}.
\]
Using the remainder estimate and $t^3\le b^3$, we obtain,
\[
\begin{aligned}
\left|\int_a^b t^3R(t,h)\,dt\right|
&\le
\frac{b^3}{2}
\omega(\Norm{h}_{L^\infty})
\Norm{h}_{L^2(a,b)}^2\\
&\le
\frac{b^3}{2}
\omega\left(C_1\Norm{h}_{H^1(a,b)}\right)
\Norm{h}_{H^1(a,b)}^2.
\end{aligned}
\]
Therefore,
\[
\mathcal{L}(\gamma+h)-\mathcal{L}(\gamma)
\ge
\left[
\frac{c}{2}
-
\frac{b^3}{2}
\omega\left(C_1\Norm{h}_{H^1(a,b)}\right)
\right]
\Norm{h}_{H^1(a,b)}^2.
\]

Since $\omega(r)\to0$ as $r\to0$, we can choose
$\varepsilon>0$ sufficiently small so that,
\[
C_1\varepsilon\le\rho,
\qquad
\frac{b^3}{2}\omega(C_1\varepsilon)\le\frac{c}{4}.
\]
Hence every nonzero $h\in H_0^1((a,b);\R^n)$ satisfying
$\Norm{h}_{H^1(a,b)}<\varepsilon$ also satisfies
$\Norm{h}_{L^\infty}\le\rho$, and,
\[
\mathcal{L}(\gamma+h)-\mathcal{L}(\gamma)
\ge
\frac{c}{4}\Norm{h}_{H^1(a,b)}^2
>0.
\]
Thus $\gamma$ is a strict $H^1$-local minimizer on $[a,b]$.
\end{proof}

\end{document}